\newtheorem{theorem}{Theorem}[section]
\newtheorem{proposition}[theorem]{Proposition}
\newtheorem{lemma}[theorem]{Lemma}
\theoremstyle{definition}
\newtheorem{definition}[theorem]{Definition}
\theoremstyle{remark}
\newtheorem{remark}[theorem]{Remark}
\newcommand{\stable}[2]{\mathcal{S}_{#1}( #2 )}
\newcommand{\udomain}{\mathcal{U}}
\newcommand{\law}{\mathcal{L}}
\newcommand{\dfn}{\mathrel{\mathop:}=} 
\newcommand{\nfd}{=\mathrel{\mathop:}}
\newcommand{\real}{\mathbb{R}}
\newcommand{\integer}{\mathbb{Z}}
\newcommand{\nanu}{\mathbb{N}}
\newcommand{\pmspace}{\mathcal{P}}
\newcommand{\E}{\mathbb{E}}
\newcommand{\ud}{\mathrm{d}}
\renewcommand{\P}{\mathbb{P}}
\renewcommand{\vec}[1]{\boldsymbol{#1}}
\newcommand{\given}{\,:\,}
\newcommand{\bigmid}{\;\big|\;}
\newcommand{\arXiv}[1]{\href{https://arxiv.org/abs/#1}{arXiv{:}#1}}
\begin{document}

\title[Wide stable neural networks]{Wide stable neural networks: Sample regularity, functional convergence and {B}ayesian inverse problems}

\author{Tomás Soto}
\address{School of Engineering Sciences, LUT University}
\email{Tomas.Soto@lut.fi}

\begin{abstract} 
We study the large-width asymptotics of random fully connected neural networks with weights drawn from $\alpha$-stable distributions, a family of heavy-tailed distributions arising as the limiting distributions in the Gnedenko-Kolmogorov heavy-tailed central limit theorem. We show that in an arbitrary bounded Euclidean domain $\mathcal{U}$ with smooth boundary, the random field at the infinite-width limit, characterized in previous literature in terms of finite-dimensional distributions, has sample functions in the fractional Sobolev-Slobodeckij-type quasi-{B}anach function space $W^{s,p}(\mathcal{U})$ for integrability indices $p < \alpha$ and suitable smoothness indices $s$ depending on the activation function of the neural network, and establish the functional convergence of the processes in the space of probability measures on $W^{s,p}(\mathcal{U})$. This convergence result is leveraged in the study of functional posteriors for edge-preserving Bayesian inverse problems with stable neural network priors.
\end{abstract} 

\subjclass[2020]{Primary: 68T07, 62F15, 60G52, 60G17; Secondary: 46E35}
\keywords{Neural networks, $\alpha$-stable distributions, sample regularity, fractional {S}obolev spaces, {B}ayesian inverse problems}

\maketitle
\sloppy

\section{Introduction}

The study of scaling limits of wide random neural networks was initiated by R.~Neal in \cite{RMN96}, in the context of priors for Bayesian learning. Neal used the classical central limit theorem to show, roughly speaking, that a perceptron with one hidden layer of width $H \in \nanu$ and suitable assumptions (such as that of finite variance) on the weight distributions and the activation function, converges with respect to finite-dimensional marginal distributions to a Gaussian process as $H \to \infty$.

The Gaussian process behaviour at the infinite-width limit has since been studied for a variety of neural network architectures, and different modes of convergence, in an expansive collection of papers by numerous authors. Motivations, apart from the context of Bayesian learning proposed by Neal, include the understanding of different initialization schemes for training, the formal study of gradient descent based training dynamics via the so-called neural tangent kernel at the infinite-width limit, and quantitative estimation of the convergence rates. As examples we mention the papers \cite{MHRTG18,JGH18-NTK,LXSBNSP19,yang20,hanin2023,EMT21,BT23,BFFS21-functional,FHMNP23}, although this list is necessarily incomplete.

The focus of this paper is in the non-Gaussian regime, where we consider heavy-tailed distributions for the weight distributions, resulting with suitable assumptions and scaling in an \emph{$\alpha$-stable process} at the infinite-width limit. These neural networks have been considered as priors for edge-preserving Bayesian inversion in \cite{LDS22}, and recently it has been observed in \cite{GSZ21} that heavy-tail like behavior for the network weights can also arise naturally in deep learning via gradient descent training.

In Sections \ref{ss:stable}--\ref{ss:nn} below, we recall the relevant concepts and definitions, and present a brief overview of literature pertaining to the heavy-tailed regime. The main results are presented in Section \ref{ss:results}. Section \ref{ss:structure} contains an overview of the paper's structure.

\subsection{Stable distributions}\label{ss:stable}

Most random variables considered in this paper will have a \emph{L\'evy $\alpha$-stable} distribution. We recall here their basic properties, and mention \cite{ST94} as the standard reference for both univariate and multivariate stable distributions.

We are in particular interested in symmetric $\alpha$-stable distributions, which are characterized in terms of a \emph{stability index} $\alpha \in (0,2]$ and a \emph{scale parameter} $\sigma > 0$, in the sense that if the random variable $u$ has an $\alpha$-stable distribution, its characteristic function is given by
\begin{equation}\label{eq:stable-characteristic}
  \E\left[ \exp\left( i \theta u\right)\right] = \exp\left( -\left(\sigma |\theta|\right)^\alpha\right)
\end{equation}
for all $\theta \in \real$. In this case, we write
\[
  u \sim \stable{\alpha}{\sigma}.
\]

From \eqref{eq:stable-characteristic} it is routinely seen that $2$-stable distributions are zero-mean Gaussians, and we will largely ignore this edge case in the sequel. Besides the case $\alpha = 2$, a closed-form expression for the density function for a random variable $u$ distributed as \eqref{eq:stable-characteristic} is known only for $\alpha = 1$, in which case $u$ has a standard Cauchy distribution. However, for all $\alpha \in (0,2)$, $u$ is a heavy-tailed distribution, and denoting by $p$ and $F$ its density and cumulative density functions respectively, we have
\[
  p(x) \sim c_{\alpha,\sigma} x^{-\alpha-1}
  \quad
  \text{and}
  \quad
  1-F(x) \sim c'_{\alpha,\sigma} x^{-\alpha}
\]
as $x \to \infty$ for some constants $c_{\alpha,\sigma}$, $c'_{\alpha,\sigma} > 0$. This obviously means that for $p > 0$ we have $\E[|u|^p] < \infty$ if and only if $p < \alpha$.

By the generalized heavy-tailed central limit theorem (see e.g.~\cite[\S 35, Theorem 2]{gnedenko-kolmogorov}), the symmetric $\alpha$-stable distributions are precisely the heavy-tailed distributions arising as the limiting distributions of suitably scaled sums of i.i.d symmetric random variables with heavy power-law-like tails.

An immediate consequence of \eqref{eq:stable-characteristic} (and in fact an alternative definition for symmetric $\alpha$-stable distributions) is that if $u_1$, $u_2$, $\cdots$, $u_n$ and $u$ are independently distributed as $\stable{\alpha}{\sigma}$, then
\[
  \sum_{i=1}^n \theta_i u_i \stackrel{d}{=} \left( \sum_{i=1}^n |\theta_i|^\alpha\right)^{1/\alpha} u \nfd \|\vec{\theta}\|_{l^\alpha} u
\]
for all $\vec{\theta} \dfn (\theta_1,\cdots,\theta_n) \in \real^n$, generalizing the familiar property from Gaussian distributions.

Multivariate symmetric $\alpha$-stable distributions can be defined in a similar manner, with a \emph{spectral measure} $\Lambda$ instead of the scale parameter $\sigma$. Since we will not need to directly access the spectral measure of any multivariate stable distribution in this paper, let us simply recall \cite[Theorem 2.1.5]{ST94} that a random vector in $\real^d$ is symmetric $\alpha$-stable if and only if all linear combinations of its components have a univariate symmetric $\alpha$-stable distribution.

Finally, a stochastic process $(f(x))_{x \in \mathcal{X}}$ with any (possibly uncountable) indexing set $\mathcal{X}$ is said to be symmetric $\alpha$-stable if $(f(x_1), \cdots, f(x_n)) \in \real^n$ is symmetric $\alpha$-stable for all finite $(x_1,\cdots,x_n) \subset \mathcal{X}$.

\subsection{The model and previous literature}\label{ss:nn}

Write $\alpha \in (0,2)$ for the stability index, fixed from now. For the sake of notational convenience, we will for the most part consider shallow neural networks with a single hidden layer (extensions of the main results to deeper architectures are presented in Section \ref{se:deeper}) and real output. We thus define the network $f^H$ of width $H \in \nanu$ as
\[
  f^H(x) \dfn \frac{1}{H^{1/\alpha}} \sum_{i=1}^H v_i \varphi(u_i^{\intercal} x + a_i) + b 
\]
for $x \in \real^d$, where the $v_i \in \real$, $b \in \real$, the coordinates of $u_i \in \real^d$ and $a \in \real$ are all indepedently distributed as $\stable{\alpha}{\sigma_v}$, $\stable{\alpha}{\sigma_b}$, $\stable{\alpha}{\sigma_u}$ and $\stable{\alpha}{\sigma_a}$ respectively, and $\varphi \colon \real\to\real$ is a continuous activation function.

The scaling factor $H^{-1/\alpha}$ above is chosen in the spirit of the heavy-tailed central limit theorem \cite[\S 35, Theorem 2]{gnedenko-kolmogorov}, as in it is in essence what one would expect to give rise to a scaling limit as $H \to \infty$ in case $\varphi$ were a bounded function.

The basic question of large-width asymptotics then concerns the existence of a limiting process as $H\to \infty$, e.g.~whether there exists an $\alpha$-stable process $(f^\infty(x))_{x\in\real^d}$ such that 
\begin{equation}\label{eq:convergence-finite-dims}
  \bigl( f^H(x_1), f^H(x_2), \cdots, f^H(x_n)\bigr) \stackrel{d}{\longrightarrow} \bigl( f^\infty(x_1), f^\infty(x_2), \cdots, f^\infty(x_n)\bigr)
\end{equation}
as $H \to \infty$ for all $\{x_1, x_2, \cdots, x_n\} \subset \real^d$.

This convergence was already heuristically explored by Neal in \cite{RMN96}, and formally it was established for shallow neural networks in \cite{DL06} with certain assumptions (such as essentially sub-linear growth) on the activation function $\varphi$. Further results for deep neural networks, convolutional architectures, more general heavy-tailed network weights, and e.g.~ReLU-activations have recently been obtained in \cite{FFP23-deep-stable,JLLY23,BFF22-deep-stable,FFP22-relu,BFFP22,LAJLYC23}.

Due to the heavy-tailed nature of the weights, the sample functions of $f^H$ are known \cite{LDS22} to have discontinuity-like properties with high probability at moderate to large widths, even with a bounded and smooth activation function such as $\tanh$ and even with a single hidden layer as in our model, making them an attractive choice of a functional prior for Bayesian inverse problems where the goal is to model rough features or edges \cite{SCR22}.

In Figures \ref{fi:1d} and \ref{fi:2d} we have plotted simulated sample functions for input dimensions $d \in \{1, 2\}$ with different stability indices $\alpha$, chosen such that the characteristic jump-like behaviour (or lack of thereof) can be observed in $[-1,1]$ and $[-1,1]^2$ respectively. The simulations were done with $(\sigma_v, \sigma_b, \sigma_u, \sigma_a) = (1, 0, 5, 2)$, $H = 10^5$ and
$
  \varphi = x \mapsto x/\max(|x|,1).
$
As a side remark and without going into detail, the expected locations of the ``jumps'' of the sample functions of shallow heavy-tailed neural networks is an interesting question as noted in \cite[Section 2.2]{LDS22}, and it can be controlled essentially by modifying the distribution of the input biases $a_i$. The main results of this paper continue to hold with a variety of such modifications.

\begin{figure}
  \includegraphics[width= \textwidth]{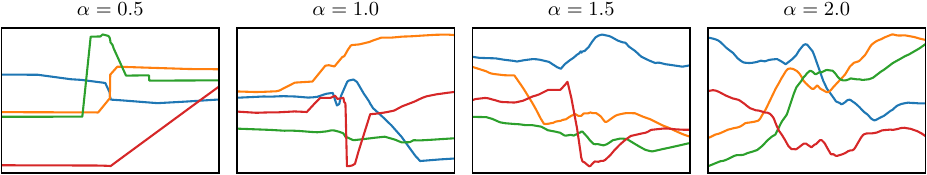}
  \caption{Sample paths of wide neural networks with one hidden layer on $[-1,1]$}\label{fi:1d}
\end{figure}

\begin{figure}
  \includegraphics[width= \textwidth]{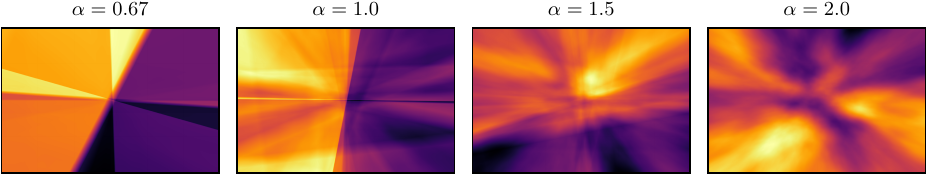}
  \caption{Sample functions of wide neural networks with one hidden layer on $[-1,1]^2$}\label{fi:2d}
\end{figure}

\subsection{Main results and discussion}\label{ss:results}

As our first main result, we establish an infinite-di\-men\-sio\-nal variant of the convergence \eqref{eq:convergence-finite-dims}. This involves determining a suitable function space containing the sample functions of both $f^H$ and $f^{\infty}$ with full probability. This function space will then act as an ambient space for the Bayesian posterior analysis in our second main result. 

The analysis is purely theoretical in nature, and we refer to the recent paper \cite{LDS22} for numerical examples of Bayesian inversion with this kind of priors. Our two main theorems together can be seen as a sort of \emph{discretization invariance} result (see e.g.~\cite[Section 3]{RPL13} and the references therein) for wide heavy-tailed neural network priors, in the sense that widening a properly scaled finite-width neural network will bring both the prior and the resulting posterior closer to well-defined limit processes. Our results are also tangentially related to the literature concerned with the construction of continuous-parameter random fields for Bayesian inversion. We refer to e.g.~\cite{LSS09, KLSS23, AW24, SCR22, CLR19, KLS22} for related examples of Besov, Laplace, $\alpha$-stable and non-standard Gaussian priors.

Our first main result is Theorem \ref{th:besov-convergence} below. The second main result of this paper, concerning Bayesian posterior convergence, requires some technical preparations to be stated in its full generality, so this will be postponed until Section \ref{se:bayes-ip}, but we present a preliminary version of it in Remark \ref{re:posterior-continuous} right after Theorem \ref{th:besov-convergence}. This will be followed by Remarks \ref{re:main-theorem-remarks}--\ref{re:parameter-ranges} offering some commentary on the assumptions, parameter ranges and the choice of the function space in Theorem \ref{th:besov-convergence}. An extension of the main results to deep neural networks is presented in Section \ref{se:deeper}.

In order to state the main results, we first introduce a relevant class of function spaces. Let $\udomain \subset \real^d$ be a domain. The fractional Sobolev-Slobodeckij space $W^{s,p}(\udomain)$ for $s \in (0,1)$ and $p > d/(d+s)$ is defined as the collection of Lebesgue-measurable functions
 $f \colon \udomain \to \real$ such that
 \begin{equation}\label{eq:slobodeckij-norm}
  \|f\|_{W^{s,p}(\udomain)} \dfn \|f\|_{L^p(\udomain)} + \left( \int_{\udomain \times \udomain} \frac{|f(x)-f(y)|^p}{|x-y|^{sp+d}} \ud x \ud y\right)^{1/p}
\end{equation}
is finite.

\begin{theorem}\label{th:besov-convergence}
Let $\udomain$ be a bounded domain in $\real^d$ with $C^{\infty}$-smooth boundary (open interval in case $d = 1$). Assume that the activation function $\varphi$ is H\"older-continuous and uniformly sublinearly increasing, i.e.~that there exist $\lambda \in (0,1]$ and $\beta \in [0,1)$ such that
  \[
    |\varphi(x) - \varphi(y)|
    \leq
    \begin{cases}
      c_\varphi |x - y|^{\lambda} \qquad & \text{if } |x - y| < 1,
      \\
      c'_\varphi |x-y|^{\beta} & \text{if } |x - y| \geq 1
    \end{cases}
  \]
  for some constants $c_\varphi$, $c'_\varphi < \infty$. Assume that $\alpha \in (d/(d+\lambda),2)$.

  Then
  \begin{enumerate}[(i)]
    \item\label{en:thm-besov-integrability} 
    For any integrability exponent $p \in (d/(d+\lambda),\alpha)$ and smoothness index $s \in (0,\lambda)$ such that $p > d/(d+s)$,
    there exists a version of the limit process $(f^{\infty}(x))_{x \in \udomain}$ in \eqref{eq:convergence-finite-dims} with sample functions in the fractional Sobolev-Slobodeckij space $W^{s,p}(\udomain)$;
  
    \item\label{en:thm-law-convergence} 
    Denoting by $\law(f^{H})$, $H \in \nanu\cup \{\infty\}$, the law of the process $f^{H}$ in part (\ref{en:thm-besov-integrability}), 
    we have the convergence
    \[
      \law(f^{H}) \stackrel{w}{\longrightarrow} \law(f^{\infty})
    \]
    in the space of Borel probability measures on $W^{s,p}(\mathcal{U})$ as $H \to \infty$.
  \end{enumerate}
\end{theorem}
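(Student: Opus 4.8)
The plan is to prove part (\ref{en:thm-besov-integrability}) by showing that a jointly measurable version of $f^\infty$ satisfies $\E\|f^\infty\|_{W^{s,p}(\udomain)}^p<\infty$, so that $\|f^\infty\|_{W^{s,p}(\udomain)}$ is finite almost surely; modifying on a null set then yields the desired version. Since $f^\infty$ is symmetric $\alpha$-stable, every increment $f^\infty(x)-f^\infty(y)$ is a symmetric $\alpha$-stable real random variable (the bias $b$ cancels), and recalling its scale from the finite-dimensional description of the limit gives
\[
  \sigma(x,y)^\alpha = \sigma_v^\alpha\,\E\bigl[\,|\varphi(u^\intercal x+a)-\varphi(u^\intercal y+a)|^\alpha\bigr],
\]
where $u,a$ carry the weight distributions. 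Because $p<\alpha$, the moment formula $\E|Z|^p=C_{p,\alpha}\sigma^p$ for $Z\sim\stable{\alpha}{\sigma}$ applies, so after Fubini the problem reduces to estimating $\int_{\udomain\times\udomain}\sigma(x,y)^p\,|x-y|^{-sp-d}\,\ud x\,\ud y$ together with the $L^p$ term $\int_\udomain\sigma(x)^p\,\ud x$.

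The key estimate is the regularity bound $\sigma(x,y)^\alpha\lesssim|x-y|^{\lambda\alpha}$ for $|x-y|$ small. Writing $w=u^\intercal(x-y)$, the variable $w$ is symmetric $\alpha$-stable with scale comparable to $\sigma_u|x-y|$, and the Hölder hypothesis bounds $|\varphi(u^\intercal x+a)-\varphi(u^\intercal y+a)|^\alpha$ by $c_\varphi^\alpha|w|^{\lambda\alpha}$ on $\{|w|<1\}$ and by $(c'_\varphi)^\alpha|w|^{\beta\alpha}$ on $\{|w|\ge1\}$. Using the stable scaling $p_t(w)=t^{-1}p_1(w/t)$ and the tail $p_1(\xi)\sim|\xi|^{-\alpha-1}$, the $\{|w|<1\}$ contribution is of order $|x-y|^{\lambda\alpha}$ (the integral $\int|\xi|^{\lambda\alpha}p_1(\xi)\,\ud\xi$ converges precisely because $\lambda<1$), while the $\{|w|\ge1\}$ contribution is of order $|x-y|^\alpha$ (convergent because $\beta<1$) and hence negligible for small $|x-y|$. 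The same sublinearity yields $\sigma(x)^\alpha=\sigma_v^\alpha\E|\varphi(u^\intercal x+a)|^\alpha+\sigma_b^\alpha<\infty$. Substituting $\sigma(x,y)^p\lesssim|x-y|^{\lambda p}$ gives the integrand $|x-y|^{(\lambda-s)p-d}$, integrable near the diagonal exactly when $s<\lambda$; away from the diagonal everything is bounded on the bounded domain $\udomain$. This proves (\ref{en:thm-besov-integrability}).

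For part (\ref{en:thm-law-convergence}) the strategy is Prokhorov's theorem: establish tightness of $\{\law(f^H)\}_{H\in\nanu}$ in $W^{s,p}(\udomain)$ and then identify the limit. Tightness follows from a \emph{uniform} moment bound, and the crucial observation is that conditionally on the weights $\{u_i,a_i\}$ the increment $f^H(x)-f^H(y)$ is symmetric $\alpha$-stable with scale $\Sigma_H^\alpha=\tfrac{\sigma_v^\alpha}{H}\sum_{i=1}^H|\varphi(u_i^\intercal x+a_i)-\varphi(u_i^\intercal y+a_i)|^\alpha$. Taking conditional moments and then applying the concavity of $t\mapsto t^{p/\alpha}$ (Jensen's inequality, valid since $p<\alpha$) collapses the average and yields $\E|f^H(x)-f^H(y)|^p\lesssim(\E|\varphi(u^\intercal x+a)-\varphi(u^\intercal y+a)|^\alpha)^{p/\alpha}\lesssim|x-y|^{\lambda p}$, with constants independent of $H$; the same argument bounds the $L^p$ term. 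Choosing $\tilde s\in(s,\lambda)$, this gives $\sup_{H\in\nanu\cup\{\infty\}}\E\|f^H\|_{W^{\tilde s,p}(\udomain)}^p<\infty$, and the compact embedding $W^{\tilde s,p}(\udomain)\hookrightarrow\hookrightarrow W^{s,p}(\udomain)$ (available because $\udomain$ is bounded with smooth boundary) together with Markov's inequality produces the required tight family.

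The remaining, and most delicate, step is identifying the limit, since the finite-dimensional convergence \eqref{eq:convergence-finite-dims} is stated through point evaluations, whereas for the small smoothness indices $s$ considered here functions in $W^{s,p}(\udomain)$ need not be continuous and point evaluation is not a continuous functional. The resolution uses the standing hypothesis $p>d/(d+s)$: it is exactly the threshold for the fractional Sobolev embedding $W^{s,p}(\udomain)\hookrightarrow L^q(\udomain)$ with $q\ge1$, so that the smoothed functionals $f\mapsto\langle f,\psi\rangle=\int_\udomain f\psi\,\ud x$, $\psi\in C_c^\infty(\udomain)$, are continuous on $W^{s,p}(\udomain)$ and separate points, hence form a determining class for weak convergence on this separable, metrizable quasi-Banach space. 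Approximating such integrals by Riemann sums of point evaluations and using that all the vectors involved are jointly $\alpha$-stable, \eqref{eq:convergence-finite-dims} upgrades to convergence in distribution of $(\langle f^H,\psi_1\rangle,\dots,\langle f^H,\psi_k\rangle)$ to the corresponding functional of $f^\infty$. Combining this with tightness, every weak subsequential limit coincides with $\law(f^\infty)$, giving the claimed convergence in $\pmspace(W^{s,p}(\udomain))$. I expect this reconciliation between the point-evaluation mode of \eqref{eq:convergence-finite-dims} and the quasi-Banach topology to be the main obstacle.
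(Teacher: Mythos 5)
Your overall architecture coincides with the paper's: a uniform-in-$H$ modulus of continuity $\E|f^H(x)-f^H(y)|^p \lesssim |x-y|^{\lambda p}$ obtained by conditioning on the inner weights and using stability plus Jensen, Fubini to convert this into $\sup_H \E\|f^H\|^p_{W^{s,p}(\udomain)}<\infty$, tightness from a compact Sobolev--Slobodeckij embedding plus Prohorov, and identification of the limit by testing against integral functionals approximated by point evaluations. Two remarks on part (i): you should treat $\lambda=1$ separately (the truncated integral $\int_{|\xi|<1/t}|\xi|^{\alpha}p_1(\xi)\,\ud\xi$ produces a logarithmic factor, harmless only because $s<\lambda$ strictly), and before speaking of $\law(f^\infty)$ as a measure on $W^{s,p}(\udomain)$ you must show that $\omega\mapsto f^\infty(\omega,\cdot)$ is measurable as a map into that space --- this does not follow automatically from joint measurability of $(\omega,x)\mapsto f^\infty(\omega,x)$, and the paper spends an appendix lemma (a description of the closed balls of $W^{s,p}(\udomain)$ via partition-of-unity discretizations) on exactly this point.

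The genuine gap is in the step you yourself flag as the main obstacle. First, ``the functionals $f\mapsto\int_\udomain f\psi$ are continuous and separate points, hence form a determining class'' is not a valid inference: on a separable metric space a point-separating family of continuous functions need not generate the Borel $\sigma$-field, and in the quasi-Banach range $p<1$ no Hahn--Banach or cylindrical-$\sigma$-field argument is available. The paper establishes the determining property by proving that every closed ball of $W^{s,p}(\udomain)$ lies in the $\sigma$-field generated by local averages (Lemma \ref{le:auxiliary-closed-ball} and Proposition \ref{pr:local-averages}); some argument of this kind is indispensable. Second, the Riemann-sum approximation of $\langle f^H,\psi\rangle$ by point evaluations must be quantified in $L^{p}(\P)$ \emph{uniformly} in $H\in\nanu\cup\{\infty\}$: for $p\ge1$ this follows from H\"older's inequality and the modulus of continuity, but for $p<1$ the triangle inequality for the spatial integral is unavailable and one needs a dyadic telescoping argument based on the subadditivity of $t\mapsto t^p$, which is precisely where the hypothesis $p>d/(d+\lambda)$ enters (the exponent $d-(d+\lambda)p$ must be negative for the telescoping series to converge). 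The remark that ``all the vectors involved are jointly $\alpha$-stable'' is not the mechanism and does not by itself yield this uniformity. With these two points repaired, your argument closes and is essentially the paper's.
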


We refer to Remark \ref{re:besov-integrability-remarks} (\ref{en:random-besov-element}) in Section \ref{se:sample-regularity} for the precise mathematical meaning of ``process with sample functions in a function space'', as in part (\ref{en:thm-besov-integrability}) above.

\begin{remark}\label{re:posterior-continuous}
  From the above Theorem we immediately have the following functional result on Bayesian posterior convergence, in the spirit of \cite[Proposition 1]{HBNPS-2020}. Let $\mathcal{G} \colon W^{s,p}(\udomain) \to \real^M$ be a continuous forward operator. This could be a collection of local averages, or more generally integrations against test functions in $L^q(\udomain)$ where
    \[
      q = \frac{dp}{(d+s)p - d} > 1
    \]
    in case $sp < d$, or a vector-valued function constructed from the afore-mentioned operations and continuous functions on $\real$. Then we consider the Bayesian inverse problem
    \[
      \vec{u} = \mathcal{G}(f) + \vec{\varepsilon}_M,
    \]
    where $\vec{\varepsilon}_M$ is an independent $\real^M$-valued noise term (for example multivariate Gaus\-sian). For given observations $\vec{u}$, the likelihood $\pi(\vec{u} \bigmid \cdot\,)$ is under suitable mild assumptions a bounded and continuous function on $W^{s,p}(\udomain)$, and so the posterior induced by the prior $\law(f^H)$, $H \in \nanu\cup\{\infty\}$, can be characterized as
    \[
      \pi^H(F \bigmid \vec{u}) = \frac{\gamma^H(F \bigmid \vec{u})}{\gamma^H(1 \bigmid \vec{u})}
    \]
    for bounded and continuous test functions $F \colon W^{s,p}(\udomain) \to \real$, where the unnormalized posterior $\gamma^H(\, \cdot \bigmid \vec{u})$ is given by
    \[
      \gamma^H(F \bigmid \vec{u}) = \int_{W^{s,p}(\udomain)} F(f) \pi(\vec{u} \bigmid f) \law(f^H)(\ud f).
    \]
    The integrand above is again under suitable assumptions a bounded and continuous function on $W^{s,p}(\udomain)$, so we have the convergence
    \[
      \pi^H(\, \cdot \bigmid \vec{u}) \stackrel{w}{\longrightarrow} \pi^\infty(\, \cdot \bigmid \vec{u})
    \]
    as $H \to \infty$. We refer to Section \ref{se:bayes-ip} for details, and a posterior convergence result (Theorem \ref{th:bayes-ip}) for more general forward operators.
\end{remark}

\begin{remark}\label{re:main-theorem-remarks}
  \begin{enumerate}[(i)]
    \item The Sobolev-Slobodeckij spaces $W^{s,p}(\udomain)$ are Besov-type function spaces of fractional-order smoothness that arise e.g.~in the contexts of trace theorems for standard integer-order Sobolev spaces and real interpolation between Sobolev and Lebesgue spaces. We refer to Appendix \ref{ap:properties-besov} for a short survey of properties of functions with this sort of regularity.

    \item The space $W^{s,p}(\udomain)$ is a quasi-Banach space when $p < 1$, in the sense that the triangle inequality holds only up to a multiplicative constant. The applicability of quasi-Banach function spaces for edge preserving Bayesian inversion has previously been observed in \cite{sullivan2017}.

    \item The spaces $W^{s,p}(\udomain)$ were chosen as the framework for our analysis because they are reasonably well-known and their definition is easy to state and understand. However, because the parameter range for $(p,s)$ in Theorem \ref{th:besov-convergence} is open-ended, we can replace $W^{s,p}(\udomain)$ in the statement of the Theorem with a variety of function spaces with similar smoothness and integrability properties.

    For instance, we have \cite[Theorem 3.3.1]{triebel-I}
    \[
      W^{s+\epsilon, p}(\udomain) \subset B^s_{p,q}(\udomain)
      \quad \text{and} \quad
      W^{s+\epsilon, p}(\udomain) \subset F^s_{p,q}(\udomain)
    \]
    for all $q > d/(d+s)$ and (small enough) $\epsilon > 0$ with continuous embeddings, so the Theorem continues to hold for the Besov space $B^s_{p,q}(\udomain)$ and the Triebel-Lizorkin space $F^s_{p,q}(\udomain)$ with admissible incides. As an example, denoting by $\mathcal{F}$ the standard Fourier transform on $\real^d$, the fractional Hardy-Sobolev space of functions $f \in L^1_{\mathrm{loc}}(\real^d)$ satisfying
    \[
      \mathcal{F}^{-1}\bigl[\xi \mapsto (1 + |\xi|^2)^{s/2} (\mathcal{F}f)(\xi)\bigr]
      \in 
      \begin{cases}
        L^p(\real^d), \quad & p > 1, \\
        h^p(\real^d), \quad & p \leq 1
      \end{cases}
    \]
    restricted to $\udomain$ coincides with the space $F^s_{p,2}(\udomain)$ \cite[Theorems 2.3.8 and 2.5.8/1]{triebel-I}. For $p \leq 1$, $h^p(\real^d)$ above stands for the real-variable local Hardy space.

    \item The restriction $\beta < 1$ precludes the use of the popular ReLU activation function in the Theorem. This is not a deficiency of the result, but a reflection of the fact that the question of $\alpha$-stable network convergence is genuinely different for linearly increasing activation functions, and it may hold (if at all) under a different scaling regime. In the context of ReLU activation in particular, this has recently been investigated in \cite{FFP22-relu}.
  
    \item \label{en:intro-kolmogorov-continuity}
    In case $d = 1$ and $\alpha \lambda > 1$, the sample paths of $f^{\infty}$ are in fact H\"older-continuous and a stronger mode of convergence holds; see Remark \ref{re:measurability-remarks} (\ref{en:kolmogorov-continuity}).
  \end{enumerate}
\end{remark}

\begin{remark}\label{re:parameter-ranges}
  A couple further comments on the parameter range in Theorem \ref{th:besov-convergence}.

  \begin{enumerate}[(i)]
    \item
    The upper bound $\lambda$ for the smoothness index $s$ is essentially sharp. Starting from the definition of the space $W^{\lambda, p}(\udomain)$, it is not hard to construct an example of an admissible $\lambda$-H\"older activation function $\varphi$ that does not belong to $W^{\lambda,p}(\real)$ locally, and the sample functions of any $f^H$ do not belong to $W^{s,p}(\udomain)$ with positive probability.

    \item
    The upper bound $\alpha$ for $p$ comes essentially from the application of Fubini's theorem, and the fact that $\alpha$-stable random variables are $p$-integrable precisely for $p < \alpha$. We do not know whether this is the best possible integrability range for the $s$-smoothness for the sample functions of $f^{\infty}$ for given $s$. However, in general we do not expect the permissible range of $ps$ to extend beyond $\alpha\lambda$. For if this were the case for $(d,\alpha,\lambda) = (1,1,1)$, then the infinite-width limit of Cauchy neural networks on the real line would have H\"older-continuous sample paths (see Remark \ref{re:main-theorem-remarks} (\ref{en:intro-kolmogorov-continuity}) above), which appears empirically unlikely.

    \item
    The lower bound $d/(d+\lambda)$ for $p$ (and consequently for $\alpha$) becomes increasingly restrictive for large $d$, but it always admits the case of Cauchy distributions with $\alpha = 1$.

    It is a consequence of the fact that, in order to work with quasi-Banach function spaces with integrability and smoothness $(p,s)$ measured in terms of local integrals, we basically need the lower bound $p > d/(d + s)$ to quarantee local integrability, and this condition also directly shows up in our proofs in a natural manner.

    For $p < d/(d+s)$, it is possible to define Besov-type spaces such as $W^{s,p}(\udomain)$ in a variety of ways, but the wide unity of different characterizations in the literature breaks down. For example, the Fourier-analytical definition of these spaces \cite{triebel-I} will in this case contain tempered distributions that do not admit representations as functions. A more direct definition, like the one preceding the statement of the Theorem, will result in a space of functions that does not embed in the space of tempered distributions, complicating the analysis of smoothness properties considerably \cite[Section 2.2.3]{triebel-I}.

    Recently \cite{HKT-2017} approximation results based on local medians instead of local integral averages have been obtained for Besov-type spaces for the full range $p > 0$, but this approach does not appear to be easily applicable in our setting, where Fubini's theorem for local integrals plays a key role in many proofs. More advanced results, such as fractional Rellich-Kondrachov-type compactness theorems, are also not known in this context as far as we are aware.
  \end{enumerate}
\end{remark}

\subsection{Structure of the paper and notation}\label{ss:structure}

The paper is structured as follows. In Section \ref{se:sample-regularity} we dig into the sample regularity of the processes $f^H$, $H \in \nanu$, verifying certain measurability properties (implicit in the statement of Theorem \ref{th:besov-convergence}) and approximation results in the process, culminating in the proof of Theorem \ref{th:besov-convergence}.

In Section \ref{se:bayes-ip} we review some technicalities concerning functional Bayesian inverse problems, already alluded to in Remark \ref{re:posterior-continuous} above, and state and prove the second main theorem of this paper, Theorem \ref{th:bayes-ip}.

In Section \ref{se:deeper} we extend Theorems \ref{th:besov-convergence} and \ref{th:bayes-ip} for arbtirarily deep neural networks in the infinite-width limit, under the additional assumption that the activation function $\varphi$ is Lipschitz-continuous (i.e.~$\lambda = 1$).

In Appendix \ref{ap:properties-besov} we review standard properties of the function spaces $W^{s,p}(\udomain)$, which are extensively used in many of our proofs. Appendix \ref{ap:auxiliary} contains the proofs of some auxiliary technical results used in Sections \ref{se:sample-regularity}--\ref{se:bayes-ip}.

We end this section by introducing some notation conventions. Denote by $\nanu \dfn \{1,2,3,\cdots\}$ the set of positive integers, and by $a{:}b$ the discrete interval $\{a, a+1, \cdots, b-1, b\}$ for any $a$, $b \in \integer$ with $a < b$. The standard Lebesgue measure on $\real^d$ is denoted by $m_d$, and we use the shorthand $\ud x$ for Lebesgue integration with respect to $m_d$.

Probabilities and expectations (with respect to probability measures obvious from context) are denoted by $\P$ and $\E$ respectively. A random process indexed by a set $\mathcal{X}$, living in a probability space $\Omega$, will be denoted by both $(f(x))_{x \in \mathcal{X}}$ and $(f(\omega,x))_{(\omega,x) \in (\Omega,\mathcal{X})}$ depending on the context, with no room for confusion.

The space of Borel probability measures on a metric space $Z$ is denoted by $\pmspace(Z)$. The notations
\[
  \stackrel{w}{\longrightarrow}
  \quad
  \text{and}
  \quad
  \stackrel{d}{\longrightarrow}
\]
are used to signify weak convergence of measures in $\pmspace(Z)$ and convergence in distribution of random elements in $Z$ respectively.

We will use the symbols $\lesssim$, $\gtrsim$ and $\approx$ when dealing with unimportant multiplicative constants. That is, when $f$ and $g$ are non-negative functions with the same domain, the notations $f \lesssim g$ or $g \gtrsim f$ mean that there exists a positive constant $c$, independent of some parameters, so that $f \leq c g$. We will emphasize the parameters with respect to which $f \lesssim g$ holds if they are not obvious from context. The notation $f \approx g$ means that $f \lesssim g$ and $g \lesssim f$.

\section{Sample regularity and functional convergence}\label{se:sample-regularity}

The goal here is to establish regularity results (and conditions for them) for the sample functions of the limit process in \eqref{eq:convergence-finite-dims}. Although we have talked about ``the'' limit process, we only have access to the process in terms of finite-dimensional distributions, and it is a general fact of probability theory that the fine properties of the sample functions of a continuous-parameter process are not uniquely determined by its finite-dimensional distributions.

In order to talk about regularity properties measured in terms of local integrals, we will reconcile with a version of the limit process that admits measurable sample functions.

\begin{proposition}\label{pr:measurable-sample-functions}
Suppose that the activation function $\varphi$ satisfies the assumptions of Theorem \ref{th:besov-convergence} with $\lambda \in (0,1]$ and $\beta \in [0,1)$, and that $\alpha \in (0,2)$.
\begin{enumerate}[(i)]
  \item \label{en:modulus-of-continuity}
  For any $p \in (0, \alpha)$, we have the following uniform modulus of continuity in probability:
  \[
    \sup_{H \in \nanu \cup \{ \infty\} }\E\left[ |f^H(x) - f^H(y)|^p \right]
    \leq
    \begin{cases}
      C_p |x-y|^{\lambda p} \quad & \text{if } \lambda < 1
      \\
      C_p |x-y|^{p} \left(|\log|x-y|| + 1\right)^{p/\alpha} & \text{if } \lambda = 1
    \end{cases}
  \]
  for all $x$, $y \in \real^d$ with $|x - y| \leq 1$, with a constant $C_p$ independent of $x$ and $y$.

  \item \label{en:measurable-version}
  For any $H \in \nanu \cup \{\infty\}$, there exists a version of the process $f^H$ defined on a complete probability space $(\Omega^H, \mathcal{S}^H, \P^H)$ such that the mapping
  \[
    \Omega^H \times \real^d \owns (\omega, x) \mapsto f^H(\omega, x) \in \real \cup \{ -\infty, \infty \}
  \]
  is jointly measurable with respect to the product $\sigma$-field.
\end{enumerate}
\end{proposition}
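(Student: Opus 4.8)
The plan is to establish part (\ref{en:modulus-of-continuity}) first via a direct characteristic-function computation, and then bootstrap part (\ref{en:measurable-version}) from it using a standard separability-plus-measurability argument.

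For part (\ref{en:modulus-of-continuity}), the key observation is that, thanks to the $l^\alpha$-scaling property of symmetric $\alpha$-stable variables recalled in Section \ref{ss:stable}, the increment $f^H(x) - f^H(y)$ is, conditionally on the inner weights $(u_i, a_i)$, a symmetric $\alpha$-stable variable whose scale is an $l^\alpha$-norm of the terms $H^{-1/\alpha}\bigl(\varphi(u_i^\intercal x + a_i) - \varphi(u_i^\intercal y + a_i)\bigr)$. First I would compute $\E[|f^H(x)-f^H(y)|^p]$ by conditioning on the inner weights: the conditional law is $\stable{\alpha}{\sigma}$ with $\sigma^\alpha = \sigma_v^\alpha H^{-1} \sum_i |\varphi(u_i^\intercal x + a_i) - \varphi(u_i^\intercal y + a_i)|^\alpha$, and since $\E[|\stable{\alpha}{\sigma}|^p] = c_{\alpha,p}\sigma^p$ for $p < \alpha$, I obtain
\[
  \E\bigl[|f^H(x)-f^H(y)|^p\bigr]
  = c_{\alpha,p}\sigma_v^p\, \E\Bigl[\Bigl(\tfrac{1}{H}\textstyle\sum_{i=1}^H |\varphi(u_i^\intercal x + a_i) - \varphi(u_i^\intercal y + a_i)|^\alpha\Bigr)^{p/\alpha}\Bigr].
\]
Since $p/\alpha < 1$, Jensen's inequality pulls the exponent inside and the i.i.d.\ structure collapses the sum, leaving a single-term bound $\E[|f^H(x)-f^H(y)|^p] \lesssim \E\bigl[|\varphi(u_1^\intercal x + a_1) - \varphi(u_1^\intercal y + a_1)|^\alpha\bigr]^{p/\alpha}$, uniformly in $H$; the $H=\infty$ case follows identically from the LePage-type series representation of the limit (or by taking limits). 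The Hölder hypothesis on $\varphi$ then reduces everything to estimating $\E\bigl[\min(|w|^{\lambda\alpha}, c)\bigr]$ where $w = u_1^\intercal(x-y)$ is, by the $l^\alpha$ property again, distributed as $\|x-y\|_{l^\alpha}\,u$ for a single $\stable{\alpha}{\sigma_u}$ variable $u$. I expect the dichotomy in the statement to come precisely from the tail integral of $\min(|w|^{\lambda\alpha},1)$ against the $\alpha$-stable density $\sim |w|^{-\alpha-1}$: when $\lambda<1$ the integral $\int \min(t^{\lambda\alpha},1)\,t^{-\alpha-1}\,\ud t$ converges and scales like $|x-y|^{\lambda\alpha}$, giving the clean $|x-y|^{\lambda p}$ bound, whereas at $\lambda=1$ the integrand becomes borderline and produces the logarithmic correction $(|\log|x-y|| + 1)^{p/\alpha}$ after raising to the power $p/\alpha$.

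For part (\ref{en:measurable-version}), I would invoke the standard result that a stochastic process which is continuous in probability (stochastically continuous) on a separable metric parameter space admits a jointly measurable version on a complete probability space; part (\ref{en:modulus-of-continuity}) gives continuity in $L^p$ and hence in probability, so stochastic continuity on $\real^d$ holds for every $H \in \nanu \cup \{\infty\}$. The construction proceeds by fixing a countable dense set $D \subset \real^d$, approximating $f^H(\cdot, x)$ by conditional-type averages or by piecewise-constant interpolants over a refining sequence of dyadic partitions of $\real^d$ adapted to the values on $D$, and passing to a limit in probability. The resulting limit is jointly measurable as a pointwise limit of jointly measurable simple approximants, and it is a version of $f^H$ because it agrees with $f^H$ in probability at each fixed $x$ by stochastic continuity. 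I would allow the extended-real range $\real \cup \{-\infty,\infty\}$ to accommodate the measure-zero exceptional set where the limit may fail to be finite, which causes no difficulty since the finite-dimensional distributions are unaffected.

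The main obstacle I anticipate is the borderline estimate at $\lambda = 1$: establishing the sharp logarithmic factor requires carefully tracking the contribution of the near-critical region in the tail integral $\int \min(|w|^\alpha, 1)\, |w|^{-\alpha-1}\,\ud w$, where the naive bound diverges and one must extract the exact $|\log|x-y||$ dependence rather than a power loss. The $\lambda < 1$ case and the measurability bootstrap are comparatively routine once the conditional-scale computation and the reduction to a single stable variable are in place.
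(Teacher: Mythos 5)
Your proposal follows essentially the same route as the paper: conditioning on the inner weights to identify the conditional $\alpha$-stable scale, using $\E[|\stable{\alpha}{\sigma}|^p]=c_{\alpha,p}\sigma^p$ together with concavity of $t\mapsto t^{p/\alpha}$ (the paper phrases this as H\"older's inequality) to collapse the sum to a single term, splitting the resulting one-dimensional integral at $|u_1^{\intercal}(x-y)|=1$ to extract the $|x-y|^{\lambda\alpha}$ versus logarithmic dichotomy, and then invoking stochastic continuity plus the standard measurable-version theorem for part (\ref{en:measurable-version}). One small imprecision: your reduction to $\E[\min(|w|^{\lambda\alpha},c)]$ with a constant $c$ on the large-increment region is only valid for $\beta=0$; under the stated hypotheses the bound there is $|w|^{\alpha\beta}$ with $\beta\in[0,1)$ possibly positive, so the tail integral is $\int_{|u|\geq\|x-y\|_{l^\alpha}^{-1}}u^{\alpha\beta}u^{-\alpha-1}\,\ud u$, which still converges precisely because $\beta<1$ and still yields the harmless contribution $\approx|x-y|^{\alpha}$.
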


\begin{remark}\label{re:measurability-remarks}
\begin{enumerate}[(i)]
  \item The condition $|x-y| \leq 1$ in part (\ref{en:modulus-of-continuity}) above can be replaced with $|x-y| \leq R$ for any $R > 0$, affecting only the constant $C_p$. This obvious from the proof below.

  \item Part (\ref{en:measurable-version}) is, of course, obvious for $H < \infty$, in which case $f^H(x)$ is a continuous function of the input $x$ and a finite number of continuously distributed parameters.

  \item We a priori allow the version of $f^{\infty}$ in part (\ref{en:measurable-version}) to take values in the two-point compactification $\real \cup \{ -\infty, \infty \}$ of $\real$ in order to appeal to general results concerning the existence of measurable versions. This will be refined later in Proposition \ref{pr:besov-integrability}.

  \item \label{en:kolmogorov-continuity}
  In case $d = 1$ and $\lambda \alpha > 1$, part (\ref{en:modulus-of-continuity}) together with a general version of the Kolmogorov-Chentsov continuity theorem (see e.g.~\cite[Theorem 23.7]{kallenberg}) implies that the limit process $f^{\infty}$ is almost surely H\"older continuous with exponent $r$ for any $r \in (0,\lambda - 1/\alpha)$, and the $d$-convergence $f^H \longrightarrow f^{\infty}$ takes place in the space $C_{\real}(\real)$ of continuous functions with the topology generated by local $\sup$-norms. When $d \geq 2$ we are far from this regime however.
\end{enumerate}
\end{remark}

\begin{proof}
(\ref{en:modulus-of-continuity}) 
We begin by noting that the case $H = \infty$ will follow from the uniform bound for the finite-width networks. For $j \in \nanu$, write $\Lambda^j$ for the function $x \mapsto \min(|x|, j)$. Then $(z, z') \mapsto \Lambda^j(z - z')^p$ is a bounded and continuous function on $(\real^d)^2$, so by monotonicity and the convergence \eqref{eq:convergence-finite-dims},
\begin{align*}
  \E\left[ |f^\infty(x) - f^\infty(y)|^p \right]
  & = \lim_{j \to \infty} \lim_{H \to\infty} \E\left[ \Lambda^j( f^H(x) - f^H(y) )^p \right]
  \\
  & \leq \liminf_{H \to \infty} \E\left[ |f^H(x) - f^H(y)|^p \right].
\end{align*}

Let then $H < \infty$. In the rest of this proof we write $\varphi_i \dfn \varphi(u_i^{\intercal} \cdot + a_i)$ for $i \in 1{:}H$. Using the freezing lemma in conjunction with the tower property of conditional expectations and the $\alpha$-stability property of the weights $v_i$, we have
\begin{align*}
  \E\left[ |f^H(x) - f^H(y)|^p \right]
  & = \E\left[  \E\left[ \left| \sum_{i=1}^{H} \frac{\varphi_i(x)-\varphi_i(y)}{H^{1/\alpha}} v_i \right|^{p} \bigmid \left\{ \varphi_i(x), \, \varphi_i(y) \right\}_{i \in 1{:}H} \right]  \right]
  \\
  & = \E\left[ |v_1|^p \right] \E\left[ \left(\sum_{i=1}^{H} \frac{|\varphi_i(x)-\varphi_i(y)|^\alpha}{H} \right)^{p/\alpha} \right].
\end{align*}
Noting that $\E[|v_1|^p] \approx \sigma_v^p \approx 1$ and using H\"older's inequality (with exponent $\alpha/p > 1$) in the latter expectation, we get
\begin{equation}\label{eq:modulus-holder}
  \E\left[ |f^H(x) - f^H(y)|^p \right] \lesssim \E\left[ |\varphi_1(x) - \varphi_1(y)|^\alpha \right]^{p/\alpha},
\end{equation}
where the right-hand side and the implicit multiplicative constant are independent of $H$.

Now since $\varphi_1(x) - \varphi_1(y) = \varphi(u_1^{\intercal}x + a_1) - \varphi(u_1^{\intercal}y + a_1)$, the assumptions on the activation $\varphi$ imply
\[
  |\varphi_1(x) - \varphi_1(y)|
  \leq
  \begin{cases}
    c_\varphi |u_1^{\intercal}(x - y)|^{\lambda} \qquad & \text{if } |u_1^{\intercal}(x - y)| < 1,
    \\
    c'_\varphi |u_1^{\intercal}(x - y)|^{\beta} & \text{if } |u_1^{\intercal}(x - y)| \geq 1,
  \end{cases}
\]
and the right-hand side estimates above are independent of the input bias $a_1$.
For fixed $x$, $y \in \real^d$, the random variable $\|x-y\|_{l^\alpha}^{-1} u_1^{\intercal}(x - y)$ is by assumption symmetric and $\alpha$-stable with scale parameter $\sigma_u$. Write $q$ for its density function. We have $q(u) \lesssim 1\land |u|^{-\alpha-1}$ for $u \in \real\setminus\{0\}$, with a multiplicative constant depending only on $\alpha$ and $\sigma_u$.

Thus,
\begin{align*}
  & \E\left[ |\varphi_1(x) - \varphi_1(y)|^{\alpha} \boldsymbol{1}\left(\|x-y\|_{l^\alpha}^{-1} |u_1^{\intercal}(x - y)| \geq \|x-y\|_{l^\alpha}^{-1} \right) \right]
  \\
  & \qquad \lesssim \|x-y\|_{l^\alpha}^{\alpha\beta} \int_{|u| \geq \|x-y\|_{l^\alpha}^{-1}} u^{\alpha\beta} u^{-\alpha-1} \ud u
  \lesssim \|x-y\|_{\ell^\alpha}^{\alpha}
  \approx |x-y|^{\alpha}
\end{align*}
with multiplicative constants independent of $x$, $y \in \real^d$. On the other hand,
\begin{align*}
  & \E\left[ |\varphi_1(x) - \varphi_1(y)|^\alpha \boldsymbol{1}\left(\|x-y\|_{l^\alpha}^{-1} |u_1^{\intercal}(x - y)| < \|x-y\|_{l^\alpha}^{-1}\right) \right]
  \\
  & \qquad \lesssim \|x-y\|_{\ell^\alpha}^{\alpha\lambda} \int_{|u| < \|x-y\|_{l^\alpha}^{-1}} u^{\alpha\lambda} q(u) \ud u
  \lesssim
  \begin{cases}
    |x-y|^{\alpha \lambda} \quad & \text{if } \lambda < 1,
    \\
    |x-y|^{\alpha} \left( |\log|x-y|| + 1 \right) & \text{if } \lambda = 1,
  \end{cases}
\end{align*}
again with constants independent of $x$ and $y$.

Overall,
\[
  \E\left[ |\varphi_1(x) - \varphi_1(y)|^\alpha  \right] \lesssim |x-y|^{\lambda \alpha}\left( |\log|x-y|| + 1 \right), 
\]
where the logarithmic term can be discarded in case $\lambda < 1$. Applying this in \eqref{eq:modulus-holder} yields the desired estimate.

(\ref{en:measurable-version}) Part (\ref{en:modulus-of-continuity}) readily implies that the process $f^H$ is for every $H \in \nanu \cup \{\infty\}$ continuous in probability in the usual sense:
\[
  \P\left( |f^H(x) - f^H(y)| > \varepsilon \right) \leq \frac{C_p}{\varepsilon^p}\E[ |f^H(x) - f^H(y)|^p ] \longrightarrow 0
\]
as $|x-y| \to 0$ for all $\varepsilon > 0$. This is a well-known criterion for the existence of a measurable version of $f^H$, possibly taking values in the two-point compactification of $\real$; see e.g.~\cite[Chapter IV, Section 3, Theorem 1 and Remarks 1--2]{gikhman-skorokhod}.
\end{proof}

In the sequel, we write $\udomain$ for a fixed bounded domain in $\real^d$ with $C^\infty$-smooth boundary (open interval in case $d = 1$). This boundary regularity condition is imposed mostly for the sake of technical convenience -- we believe that a Lipschitz domain such as the unit cube would also work, but were unable to find suitable references for the analysis of quasi-Banach function spaces in this context.

Recall the fractional Sobolev-Slobodeckij space $W^{s,p}(\udomain)$ for $s \in (0,1)$ and $p > d/(d+s)$, defined as the collection of Lebesgue-measurable functions $f \colon \udomain \to \real$ such that the quasinorm  \eqref{eq:slobodeckij-norm} in Section \ref{ss:results} is finite (more precisely, $W^{s,p}(\udomain)$ becomes a quasi-normed space after identifying the functions that agree $m_d$-almost everywhere on $\udomain$). 

Although $\|\cdot\|_{W^{s,p}(\udomain)}$ is not a norm for $p < 1$, $W^{s,p}(\udomain)$ is a separable and complete metric space when endowed with the metric $(f, g) \mapsto \|f - g\|_{W^{s,p}(\udomain)}^{p \land 1}$. A Sobolev-type embedding theorem implies that in the range $p > d/(d+s)$, functions in $W^{s,p}(\udomain)$ are in fact locally integrable, and that evaluations of local averages are continuous functionals in $W^{s,p}(\udomain)$. For these and other results concerning these function spaces, see Proposition \ref{pr:besov-properties} in Appendix \ref{ap:properties-besov}.

We then have following result concerning the expected smoothness energy of the sample functions of $f^H$, including the limiting process.

\begin{proposition}\label{pr:besov-integrability}
Suppose that the activation function $\varphi$ satisfies the assumptions of Theorem \ref{th:besov-convergence} with $\lambda \in (0,1]$ and $\beta \in [0,1)$, and that $\alpha \in (d/(d+\lambda), 2)$.

\begin{enumerate}[(i)]
  \item\label{en:besov-integrability}
  For any $p \in (d/(d+\lambda), \alpha)$ and $s \in (0,\lambda)$ such that $p >
  d/(d+s)$,
  \[
    \sup_{H \in \nanu \cup \{\infty\}} \E\left[ \|f^H\|_{W^{s,p}(\udomain)}^p \right] < \infty.
  \]

  \item\label{en:besov-process}
  For every $H \in \nanu \cup \{\infty\}$, and $p$ and $s$ like in part (\ref{en:besov-integrability}), an indistinguishable version of the process $f^{H}$ can be realized as a process with sample functions in $W^{s,p}(\udomain)$, in the sense that $f^{H}(\omega, \cdot) \in W^{s,p}(\udomain)$ for all $\omega \in \Omega^H$, and $\omega \mapsto f^{H}(\omega, \cdot)$ is an $\mathcal{S}^H$-measurable random element in the complete and separable metric space $W^{s,p}(\udomain)$.
\end{enumerate}

\end{proposition}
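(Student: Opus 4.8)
The plan is to establish part (\ref{en:besov-integrability}) directly by moment estimates and then deduce part (\ref{en:besov-process}) from it together with the joint measurability already secured in Proposition \ref{pr:measurable-sample-functions}(\ref{en:measurable-version}). For part (\ref{en:besov-integrability}), I would begin with the quasi-triangle inequality to reduce the $p$-th moment of the quasinorm to the two pieces appearing in \eqref{eq:slobodeckij-norm},
\[
  \E\left[ \|f^H\|_{W^{s,p}(\udomain)}^p \right]
  \lesssim
  \int_{\udomain} \E\left[ |f^H(x)|^p \right] \ud x
  +
  \int_{\udomain \times \udomain} \frac{\E\left[ |f^H(x) - f^H(y)|^p \right]}{|x-y|^{sp+d}} \ud x \ud y,
\]
where interchanging expectation and integration is legitimate by Tonelli's theorem, all integrands being nonnegative and (by the joint measurability) jointly measurable. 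The whole point is then to bound each term uniformly in $H \in \nanu \cup \{\infty\}$.

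For the first ($L^p$) term I would control $\E[|f^H(x)|^p]$ uniformly in $x \in \udomain$. Conditioning on the hidden activations and using the $\alpha$-stability of the output weights $v_i$, exactly as in the proof of Proposition \ref{pr:measurable-sample-functions}(\ref{en:modulus-of-continuity}), followed by H\"older's inequality with exponent $\alpha/p$, reduces matters to $\E[|\varphi_1(x)|^\alpha]^{p/\alpha}$ (plus the finite contribution of $b$, integrable since $p < \alpha$). The uniform sublinearity gives $|\varphi(z)| \lesssim 1 + |z|^\beta$, and since $u_1^{\intercal} x + a_1$ is $\alpha$-stable with scale parameter bounded over the bounded set $\udomain$, the moment $\E[|u_1^{\intercal}x + a_1|^{\alpha\beta}]$ is finite and bounded precisely because $\alpha\beta < \alpha$, i.e.\ because $\beta < 1$. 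The case $H = \infty$ is handled by the same truncation-and-limit passage (via $\Lambda^j$ and Fatou) used in Proposition \ref{pr:measurable-sample-functions}. For the second (seminorm) term I would insert the uniform modulus of continuity from Proposition \ref{pr:measurable-sample-functions}(\ref{en:modulus-of-continuity}), valid for $|x-y|$ up to the diameter of $\udomain$ by the remark following it. When $\lambda < 1$ the integrand becomes $|x-y|^{(\lambda-s)p-d}$, whose integral over $\udomain \times \udomain$ converges exactly when the exponent exceeds $-d$, i.e.\ when $s < \lambda$; in the borderline case $\lambda = 1$ the extra logarithmic factor is harmless since $s < 1$ still leaves a strictly integrable power. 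All constants produced are $H$-independent, giving the claimed uniform finiteness.

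For part (\ref{en:besov-process}), I would take the jointly measurable version from Proposition \ref{pr:measurable-sample-functions}(\ref{en:measurable-version}). Part (\ref{en:besov-integrability}) gives $\E[\|f^H\|_{W^{s,p}(\udomain)}^p] < \infty$, hence $\|f^H(\omega,\cdot)\|_{W^{s,p}(\udomain)} < \infty$ for $\P^H$-almost every $\omega$; in particular the sample functions are real-valued (disposing of the a priori $\pm\infty$ values) and lie in $W^{s,p}(\udomain)$ off a $\P^H$-null set of $\omega$, on which I would simply redefine $f^H(\omega,\cdot) \dfn 0$. Since this alters the process only on a null set of $\omega$, it produces an indistinguishable version all of whose sample functions lie in $W^{s,p}(\udomain)$. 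The one remaining point is measurability of $\omega \mapsto f^H(\omega,\cdot)$ as a random element of the separable, complete metric space $W^{s,p}(\udomain)$: for any fixed $g \in W^{s,p}(\udomain)$, both $\int_{\udomain} |f^H(\omega,x) - g(x)|^p \ud x$ and the associated Gagliardo double integral are measurable in $\omega$ by Tonelli, so $\omega \mapsto \|f^H(\omega,\cdot) - g\|_{W^{s,p}(\udomain)}$ is measurable; letting $g$ range over a countable dense set and using that metric balls generate the Borel $\sigma$-field of a separable metric space yields measurability.

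I expect the main obstacle to be twofold. Analytically, the delicate step is the seminorm integral: it must be controlled \emph{uniformly} in $H$, which forces reliance on the $H$-uniform modulus of continuity, and its convergence hinges on correctly tracking the sharp parameter condition $s < \lambda$ (and the harmless logarithmic correction at $\lambda = 1$). Technically, the subtler point is the measurability in part (\ref{en:besov-process}): for $p < 1$ the space $W^{s,p}(\udomain)$ is only quasi-Banach, so Pettis-type weak-measurability arguments are unavailable, and one must instead argue directly through joint measurability, Tonelli, and separability as above.
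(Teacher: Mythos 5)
Your part (\ref{en:besov-integrability}) follows the paper's proof essentially verbatim: Fubini/Tonelli plus the freezing lemma, stability and H\"older's inequality for the $L^p$ term (with the bias and the sublinear growth of $\varphi$ handled the same way), and the uniform modulus of continuity from Proposition \ref{pr:measurable-sample-functions} (\ref{en:modulus-of-continuity}) for the Gagliardo seminorm, with the integrability hinging on $s<\lambda$ exactly as in the paper. The redefinition of $f^H(\omega,\cdot)\equiv 0$ on the exceptional null set in part (\ref{en:besov-process}) also matches.

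Where you genuinely diverge is the measurability step in part (\ref{en:besov-process}). You argue directly: for each fixed $g\in W^{s,p}(\udomain)$ the map $\omega\mapsto\|f^H(\omega,\cdot)-g\|_{W^{s,p}(\udomain)}$ is $\mathcal{S}^H$-measurable by Tonelli applied to the jointly measurable nonnegative integrands defining the quasinorm, and then separability of $W^{s,p}(\udomain)$ lets balls centered in a countable dense set generate the Borel $\sigma$-field. This is correct and self-contained; the redefinition on an $\mathcal{S}^H$-null set preserves joint measurability by completeness, so Tonelli still applies. The paper instead postpones this step to Appendix \ref{ap:auxiliary} and proves it by expressing closed balls of $W^{s,p}(\udomain)$ through the discrete convolution operators $T^n$ and the local-average maps $\pi_{\mathcal{B}^n}$ (Lemma \ref{le:auxiliary-closed-ball}), reducing measurability to that of the random vectors $\omega\mapsto\pi_{\mathcal{B}^n}(f^H(\omega,\cdot))$. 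Your route is shorter and arguably more elementary for this one purpose; the paper's route is heavier but reuses machinery it needs anyway, namely the same ball characterization underlies Proposition \ref{pr:local-averages}, which shows that local averages separate $\pmspace(W^{s,p}(\udomain))$ and is essential to the proof of Theorem \ref{th:besov-convergence} (\ref{en:thm-law-convergence}). So your argument is a valid substitute for the measurability claim itself, but it does not replace the appendix machinery in the paper's overall architecture.
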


\begin{remark}\label{re:besov-integrability-remarks}
  \begin{enumerate}[(i)]
    \item
    In part (\ref{en:besov-integrability}) above, the process $f^H$ is to be understood as a measurable version as in part (\ref{en:measurable-version}) of Proposition \ref{pr:measurable-sample-functions}, restricted to $x \in \udomain$. This version could in principle take values in $\{ -\infty, \infty \}$, presenting a problem for the definition of the $W^{s,p}(\udomain)$-quasinorm defined in terms of first-order differences. However, the first part of the proof below shows that $|f^H(x)| < \infty$ for almost all $x \in \udomain$ almost surely. With this and the measurability of the process $f^H$ in mind, the statement of part (\ref{en:besov-integrability}) is well-defined, and Fubini's theorem implies that the exchanges of the order of integration in the proof are justified.

    \item\label{en:random-besov-element}
    Regarding part (\ref{en:besov-process}), we stress that from this point on, it is understood that $f^H$ exists both as an $\mathcal{S}^H$-measurable pointwise defined process $(f^H(x))_{x \in \udomain}$, and as an $\mathcal{S}^H$-measurable random function as in the statement of the result. These are a priori two distinct (but not mutually exclusive) ways for a process to exist, in the sense that pointwise evaluation of functions is not a strictly well-defined operation in the function space $W^{s,p}(\udomain)$.
  \end{enumerate}
\end{remark}

\begin{proof}
(\ref{en:besov-integrability}) 
We first consider the expected $L^p$-energy of $f^H$ for $H < \infty$ (the uniform estimate for $H < \infty$ can be extended to $H = \infty$ like in the proof of Proposition \ref{pr:measurable-sample-functions}). By Fubini's theorem and the freezing lemma, we have
\begin{align*}
  \E\left[ \|f^H\|_{L^p(\udomain)}^p \right]
  & = \E\left[ \int_{\udomain} |f^H(x)|^p \ud x  \right]
  = \int_{\udomain} \E\left[|f^H(x)|^p \right] \ud x
  \\
  & = \int_{\udomain} \E[|v_1|^p] \E\left[ \left(\sum_{i=1}^{H} \frac{|\varphi(u^\intercal_i x + a_i)|^\alpha}{H} + \left(\frac{\sigma_b}{\sigma_v}\right)^\alpha \right)^{p/\alpha}\right] \ud x
  \\
  & \lesssim \int_{\udomain} \E\left[ |\varphi(u_1^{\intercal} x + a_1)|^\alpha \right]^{p/\alpha} \ud x + 1
\end{align*}
as in the proof of Proposition \ref{pr:measurable-sample-functions}. The upper bound above is independent of $H$, and the integral over $\udomain$ is finite because of the sublinearity condition on $\varphi$. Namely, $u_1^{\intercal} x + a_1$ has by assumption a symmetric $\alpha$-stable distribution with scale parameter $(\sigma_u^{\alpha} \| x \|_{\ell^\alpha}^\alpha + \sigma_a^\alpha)^{1/\alpha}$, and $|\varphi(y)| \leq c''_\varphi(1 + |y|^\beta)$ for some constant $c''_\varphi$. Thus,
\[
  \E\left[ |\varphi(u_1^{\intercal} x + a_1)|^\alpha \right] \lesssim \E\left[ 1 + |u_1^{\intercal} x + a_1|^{\alpha\beta} \right] \lesssim 1 + (\sigma_u^{\alpha} \| x \|_{\ell^\alpha}^\alpha + \sigma_a^\alpha)^\beta
\]
for some implicit multiplicative constants independent of $x$. In fact, the latter quantity can be bounded independently of $x$ since the domain $\udomain$ is bounded by assumption.

Since $f^H \in L^p(\udomain)$ almost surely, we obviously have $|f^H(x)| < \infty$ for almost all $x \in \udomain$ almost surely. Thus we can consider homogeneous smoothness part of \eqref{eq:slobodeckij-norm}, which we denote by $\|f^H(x)\|_{\dot{W}^{s,p}(\udomain)}$. We have, again by Fubini's theorem,
\begin{align*}
  \E\left[ \|f^H(x)\|_{\dot{W}^{s,p}(\udomain)}^p \right]
  & = \E\left[ \int_{\udomain \times \udomain}  \frac{|f^H(x)-f^H(y)|^p}{|x-y|^{s p+d}} \ud x \ud y \right]
  \\
  & = \int_{\udomain \times \udomain} |x-y|^{-s p-d} \E\left[ |f^H(x)-f^H(y)|^p \right] \ud x \ud y.
\end{align*}
Applying the modulus of continuity estimate in Proposition \ref{pr:measurable-sample-functions} (\ref{en:modulus-of-continuity}) here yields
\[
  \E\left[ \|f^H(x)\|_{\dot{W}^{s,p}(\udomain)}^p \right]
  \lesssim \int_{\udomain \times \udomain} |x-y|^{(\lambda-s) p-d} \left|\log|x-y|\right|^{p/\alpha} \ud x \ud y,
\]
where the upper bound is independent of $H$ (and finite because $\lambda - s > 0$).

(\ref{en:besov-process}) 
By part (\ref{en:besov-integrability}) and the completeness of the probability space $(\Omega^H, \mathcal{S}^H, \P^H)$ (see part (\ref{en:measurable-version}) in Proposition \ref{pr:measurable-sample-functions}), we can redefine $f^H(\omega, \cdot) \equiv 0$ for the events $\omega$ such that $\|f(\omega, \cdot)\|_{W^{s,p}(\udomain)}$ is $\infty$ or undefined (in the sense of $|f^H(\omega, x) - f^H(\omega, y)|$ being undefined in case $f^H(\omega, z) \in \{ -\infty, \infty\}$ for $z$ in a positive-measure subset of $\udomain$), the totality of which by part (\ref{en:besov-integrability}) has zero $\P^H$-measure.

It then remains to show that under the mapping $\omega \mapsto f^H(\omega, \cdot)$, the preimages of closed balls in $W^{s,p}(\udomain)$ belong to $\mathcal{S}^H$. Under the properties of the process $f^H$ accumulated thus far, this result is not particular to the neural networks under consideration, and is most easily proven using additional results on the topology of $W^{s,p}(\udomain)$, so we will postpone the proof until Appendix \ref{ap:auxiliary}.
\end{proof}

With the above result in mind, we can introduce the next major ingredient for the proof of Theorem \ref{th:besov-convergence}.

\begin{proposition}\label{pr:relatively-compact}
Let $p$ and $s$ be like in Theorem \ref{th:besov-convergence}. Write $\law(f^H)$ for the law of the process $f^H$ on $W^{s,p}(\udomain)$ (see part (\ref{en:besov-process}) of Proposition \ref{pr:besov-integrability}). Then the set
\[
  \left\{ \law(f^H) \given H \in \nanu \right\}
\]
is relatively compact in the space $\pmspace(W^{s,p}(\udomain))$.
\end{proposition}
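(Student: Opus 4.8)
The plan is to invoke Prokhorov's theorem. Since $W^{s,p}(\udomain)$ is a complete and separable metric space (as recorded after Proposition \ref{pr:measurable-sample-functions}), a family of Borel probability measures on it is relatively compact in $\pmspace(W^{s,p}(\udomain))$ if and only if it is tight. It therefore suffices to exhibit, for every $\epsilon > 0$, a compact set $K_\epsilon \subset W^{s,p}(\udomain)$ with $\law(f^H)(K_\epsilon) \geq 1 - \epsilon$ for all $H \in \nanu$.

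The source of compactness is the room left by the strict inequality $s < \lambda$ in Proposition \ref{pr:besov-integrability}. I would fix an auxiliary smoothness index $s' \in (s, \lambda)$. Since $s' > s$ forces $d/(d+s') < d/(d+s) < p < \alpha$, the pair $(p, s')$ is admissible in Proposition \ref{pr:besov-integrability}, so
\[
  M \dfn \sup_{H \in \nanu} \E\left[ \|f^H\|_{W^{s',p}(\udomain)}^p \right] < \infty,
\]
and, re-applying part (\ref{en:besov-process}) with $s'$ in place of $s$, each $f^H$ may simultaneously be realized as a random element of $W^{s',p}(\udomain)$. By Markov's inequality, for every $R > 0$ and every $H$ one has $\P(\|f^H\|_{W^{s',p}(\udomain)} > R) \leq M/R^p$, so the closed ball $\bar B_R \dfn \{ g \in W^{s',p}(\udomain) : \|g\|_{W^{s',p}(\udomain)} \leq R \}$ carries mass at least $1 - M/R^p$ under every $\law(f^H)$.

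It then remains to upgrade $\bar B_R$ to a compact subset of the coarser space. For this I would appeal to the compact embedding of Besov-type spaces of higher into lower smoothness on a bounded domain with smooth boundary, namely $W^{s',p}(\udomain) \hookrightarrow\hookrightarrow W^{s,p}(\udomain)$, which is available in the present range $p > d/(d+s')$ where $W^{\sigma,p}(\udomain)$ coincides with $B^{\sigma}_{p,p}(\udomain)$ and the standard Triebel theory applies. Under this embedding $\bar B_R$ is precompact in $W^{s,p}(\udomain)$, so its closure $K_R$ in $W^{s,p}(\udomain)$ is compact; since $\bar B_R \subseteq K_R$ we obtain $\{ f^H \in \bar B_R \} \subseteq \{ f^H \in K_R \}$ and hence $\law(f^H)(K_R) \geq 1 - M/R^p$ for all $H$. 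Choosing $R = R(\epsilon)$ so large that $M/R^p < \epsilon$ produces the required $K_\epsilon = K_R$ and establishes tightness.

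The main obstacle is precisely the compact embedding in the last step: in the quasi-Banach regime $p < 1$ one cannot quote the classical Rellich--Kondrachov theorem verbatim, and some care is needed to justify precompactness of $W^{s',p}$-balls in $W^{s,p}$ through the identification $W^{\sigma,p}(\udomain) = B^{\sigma}_{p,p}(\udomain)$, valid for $\sigma$ with $p > d/(d+\sigma)$. A secondary, routine point is measurability: one should note that $g \mapsto \|g\|_{W^{s',p}(\udomain)}$ is lower semicontinuous (via Fatou's lemma, since $W^{s,p}$-convergence yields $L^p$-convergence and hence a.e.\ convergence along a subsequence), so that $\{\|f^H\|_{W^{s',p}(\udomain)} \leq R\}$ is a genuine event and $K_R$ is Borel. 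Both points are entirely about the function space and are independent of the neural-network structure, so they may be isolated as a general lemma.
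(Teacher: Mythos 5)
Your argument is correct and is essentially the paper's proof: Prohorov's theorem combined with the uniform moment bound of Proposition \ref{pr:besov-integrability} in a space of strictly higher smoothness, together with a compact Besov-type embedding into $W^{s,p}(\udomain)$. The only real difference is the choice of auxiliary parameters: the paper takes $p' < p$ alongside $s' \in (s,\lambda)$ with $p' > d/(d+s')$ and $s' - d/p' < s - d/p$, so that the compact embedding $W^{s',p'}(\udomain) \subset W^{s,p}(\udomain)$ is exactly the Rellich--Kondrachov statement recorded in Proposition \ref{pr:besov-properties} (\ref{en:rellich-kondrachov}); this small downward perturbation of the integrability index sidesteps the equal-$p$ embedding $W^{s',p}(\udomain) \hookrightarrow W^{s,p}(\udomain)$ that you flag as the main obstacle (which does hold, by the same Triebel reference, but is not the version the paper records).
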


\begin{proof}
We can obviously find $p' < p$ and $s' \in (s, \lambda)$ so that $p' > d/(d+s')$ and $s' - d/p' < s - d/p$. Then a Rellich-Kondrachov-type embedding theorem (see e.g.~Proposition \ref{pr:besov-properties} (\ref{en:rellich-kondrachov}) in Appendix \ref{ap:properties-besov}, where the roles of $(s,p)$ and $(s',p')$ are reversed) implies that we have
\[
  W^{s',p'}(\udomain) \subset W^{s,p}(\udomain)
\]
with a compact embedding. In particular
\[
  K_C \dfn \{ f \in W^{s,p}(\udomain) \given \|f \|_{W^{s',p'}(\udomain)} \leq C \}
\]
is a compact set in $W^{s,p}(\udomain)$ for all $C > 0$, and by Proposition \ref{pr:besov-integrability} (\ref{en:besov-integrability}), we have
\[
  \sup_{H \in \nanu} \P\left( f^H \in W^{s,p}(\udomain) \setminus K_C \right) \leq C^{-p'} \sup_{H \in \nanu } \E\left[ \|f^H\|_{W^{s',p'}(\udomain)}^{p'} \right] \longrightarrow 0.
\]
as $C \to \infty$. Prohorov's theorem \cite[Theorem 5.1]{billingsley} thus implies the stated compactness.
\end{proof}

For the sake of stating the next results, we introduce the following notation for local averages. Write
\[
  f_A \dfn \frac{1}{m_{\udomain}(A)} \int_{A} f(x) \ud x
\]
for all $f \colon \udomain \to \real$ and $A \subset \udomain$ such that the right-hand side above is well-defined (in particular, it is for $f \in L^1(\udomain)$ and measurable $A \subset \udomain$ with $m_d(A) > 0$).

\begin{proposition}\label{pr:convergence-local-averages}
Under the hypotheses of Theorem \ref{th:besov-convergence}, the family of processes $(f^H)_{H \in \nanu}$ converges to $f^{\infty}$ in terms of local averages, in the sense that
\begin{equation}\label{eq:convergence-local-avgs}
  \left( (f^H)_{B_1}, \cdots, (f^H)_{B_N} \right)
  \stackrel{d}{\longrightarrow} \left( (f^{\infty})_{B_1}, \cdots, (f^{\infty})_{B_N} \right)
\end{equation}
as $H \to \infty$ for all finite collections $(B_i)_{1 \leq i \leq N}$ of balls centered in $\udomain$.
\end{proposition}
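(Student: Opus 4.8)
The plan is to approximate each local average $(f^H)_{B_j}$ by Riemann-type sums, which are finite-dimensional functionals of the process, and then invoke the finite-dimensional convergence \eqref{eq:convergence-finite-dims} together with a uniform-in-$H$ control of the approximation error. Fix the balls $B_1, \dots, B_N$ and, for each $n \in \nanu$, a partition of (the part in $\udomain$ of) $B_j$ into measurable pieces $\{A^{(j)}_{n,k}\}_k$ of mesh $\leq \delta_n \to 0$ with representative points $x^{(j)}_{n,k} \in A^{(j)}_{n,k}$. Writing $\pi_n$ for the associated ``nearest representative'' map, set
\[
  (V^H)_j \dfn (f^H)_{B_j}, \qquad (V^H_n)_j \dfn \sum_k \frac{m_d(A^{(j)}_{n,k})}{m_d(B_j)} f^H(x^{(j)}_{n,k}),
\]
where point evaluations refer to the measurable version from Proposition \ref{pr:measurable-sample-functions} (\ref{en:measurable-version}). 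For fixed $n$, the map from the finite vector $(f^H(z))_z$, with $z$ ranging over all representative points, to $V^H_n$ is linear and continuous, so \eqref{eq:convergence-finite-dims} and the continuous mapping theorem give $V^H_n \stackrel{d}{\longrightarrow} V^\infty_n$ as $H \to \infty$ for each fixed $n$.

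The crux is the uniform error bound $\sup_{H \in \nanu \cup \{\infty\}} \E[|(V^H)_j - (V^H_n)_j|^p] \leq C_n$ with $C_n \to 0$ as $n \to \infty$, for the exponent $p < \alpha$ from Theorem \ref{th:besov-convergence}. For $H < \infty$ I would argue exactly as in the proof of Proposition \ref{pr:measurable-sample-functions}: conditionally on the hidden weights $\{u_i, a_i\}$, the error $(V^H)_j - (V^H_n)_j = \frac{1}{m_d(B_j) H^{1/\alpha}} \sum_i v_i \int_{B_j} (\varphi_i(x) - \varphi_i(\pi_n(x))) \ud x$ is symmetric $\alpha$-stable in the $v_i$'s, so the freezing lemma and Hölder's inequality (with exponent $\alpha/p > 1$, as before) collapse the sum and yield
\[
  \E\left[ |(V^H)_j - (V^H_n)_j|^p \right] \lesssim \frac{1}{m_d(B_j)^p} \, \E\left[ \left| \int_{B_j} \bigl(\varphi_1(x) - \varphi_1(\pi_n(x))\bigr) \ud x \right|^{\alpha} \right]^{p/\alpha},
\]
with an implicit constant independent of $H$ and $n$. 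The right-hand side no longer involves the $v_i$ or $H$; since $\pi_n(x) \to x$ and $\varphi$ is continuous, the inner integral tends to $0$ almost surely, and the sublinear growth of $\varphi$ provides an $n$-independent dominating function of the form $c(1 + |u_1|^{\alpha\beta} + |a_1|^{\alpha\beta})$, which is integrable because $\alpha\beta < \alpha$ and the $\alpha$-stable weights have finite moments of every order below $\alpha$. Dominated convergence then forces $C_n \to 0$. This argument sidesteps the usual difficulty that Minkowski's inequality fails in the quasi-Banach regime $p < 1$, by reducing everything to a single scalar integral before taking the power $\alpha$.

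For $H = \infty$ the same bound is inherited by the truncation-and-$\liminf$ device already used in Proposition \ref{pr:measurable-sample-functions} (\ref{en:modulus-of-continuity}): both $(V^H_n)_j$, a fixed finite-dimensional functional, and suitably truncated versions of $(V^H)_j$ converge along $H \to \infty$ by \eqref{eq:convergence-finite-dims}, so the uniform bound for finite widths passes to the limit via Fatou. With the two ingredients in hand --- $V^H_n \stackrel{d}{\longrightarrow} V^\infty_n$ for each fixed $n$, and $\sup_H \P(|V^H - V^H_n| > \varepsilon) \leq \varepsilon^{-p} C_n \to 0$ --- the standard approximation theorem for convergence in distribution \cite[Theorem 3.2]{billingsley} yields $V^H \stackrel{d}{\longrightarrow} V^\infty$, which is precisely \eqref{eq:convergence-local-avgs}. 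I expect the main obstacle to be the uniform error estimate in the regime $p < 1$, and in particular making the $H = \infty$ case rigorous, since $f^\infty$ is accessible only through its finite-dimensional distributions; the conditional-stability reduction above is what makes this tractable.
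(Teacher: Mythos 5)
Your overall architecture matches the paper's: approximate each local average by a finite linear combination of point evaluations chosen independently of $H$, prove a uniform-in-$H$ $L^p$ error bound, and combine the finite-dimensional convergence \eqref{eq:convergence-finite-dims} with a three-term approximation argument (the paper uses bounded Lipschitz test functions where you invoke \cite[Theorem 3.2]{billingsley}; these are interchangeable). For \emph{finite} $H$ your derivation of the error bound is genuinely different from the paper's Lemma \ref{le:integral-approximations} and arguably cleaner: conditioning on the hidden weights, collapsing the sum over neurons by stability of the $v_i$ and Jensen/H\"older with exponent $\alpha/p$, and then running dominated convergence on the single scalar integral $\int_{B_j}(\varphi_1(x)-\varphi_1(\pi_n(x)))\,\ud x$ with the dominating function $c(1+|u_1|^{\alpha\beta}+|a_1|^{\alpha\beta})$. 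This avoids the dyadic telescoping in Appendix \ref{ap:auxiliary} entirely and correctly sidesteps the failure of the triangle inequality for $p<1$. That part of the argument is sound.

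The gap is the case $H=\infty$, which you flag as the main difficulty but then dispatch with an argument that does not work. The ``truncation-and-$\liminf$ device'' from the proof of Proposition \ref{pr:measurable-sample-functions}~(\ref{en:modulus-of-continuity}) applies because $|f^H(x)-f^H(y)|^p$ is a continuous function of the \emph{two-dimensional marginal} $(f^H(x),f^H(y))$, so its truncated expectations converge by \eqref{eq:convergence-finite-dims}. The quantity $(V^\infty)_j-(V^\infty_n)_j$ contains the local average $(f^\infty)_{B_j}$, which is not a functional of any finite-dimensional marginal; asserting that ``suitably truncated versions of $(V^H)_j$ converge along $H\to\infty$ by \eqref{eq:convergence-finite-dims}'' is precisely the convergence of local averages you are trying to prove, so the step is circular. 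Moreover, your conditional-stability computation has no analogue at $H=\infty$, since the limit process is not presented as a sum over neurons. Without the $H=\infty$ bound you cannot establish $V^\infty_n\stackrel{d}{\longrightarrow}V^\infty$ as $n\to\infty$, which is the second hypothesis of \cite[Theorem 3.2]{billingsley}. The repair is the paper's route: the modulus-of-continuity estimate of Proposition \ref{pr:measurable-sample-functions}~(\ref{en:modulus-of-continuity}) \emph{does} extend to $H=\infty$ (there the truncation device is legitimate), and combining it with Fubini's theorem one gets $\sum_n\int_\udomain\E[|f^\infty(x)-f^{\infty,n}(x)|^p]\,\ud x<\infty$, hence almost-everywhere convergence of the discrete approximations on the product space; a telescoping estimate between consecutive approximation levels (each difference being a finite-dimensional functional) plus Fatou's lemma then transfers the uniform bound to $H=\infty$. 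Your finite-$H$ computation can be kept, but this separate argument for the limit process is indispensable.
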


For the proof, we need the following uniform convergence result for discrete approximation of local integrals. Its proof is given in Appendix \ref{ap:auxiliary}.

\begin{lemma}\label{le:integral-approximations}
Let $\alpha$, $\beta$, $\lambda$ and $p$ be as in Proposition \ref{pr:besov-integrability}. Then for any measurable subset $A$ of $\udomain$ with nonzero measure, and any $n \in \nanu$, there exist points $(x^n_j)_{j=1}^{\kappa(n)} \subset \udomain$ and scalars $(\theta^n_j)_{j=1}^{\kappa(n)} \subset \real_+$ independent of $H \in \nanu \cup \{\infty\}$ such that
\[
  \lim_{n\to\infty} \E \left[ \Biggl| (f^H)_A - \sum_{j=1}^{\kappa(n)} \theta^n_j f^{H}(x^n_j) \Biggr|^{p} \right] = 0
\]
uniformly with respect to $H$.

\end{lemma}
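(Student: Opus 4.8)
The plan is to approximate the local average $(f^H)_A$ by \emph{deterministic} quadrature rules and to control the quadrature error in $L^p(\P)$ \emph{uniformly} in $H$ by invoking the uniform modulus of continuity from Proposition~\ref{pr:measurable-sample-functions}(\ref{en:modulus-of-continuity}). For each $n$ I would fix a measurable partition $A = \bigsqcup_{j=1}^{\kappa(n)} A^n_j$ into cells of diameter at most $\delta_n$, with $\delta_n \to 0$ and $m_d(A^n_j) > 0$, choose sample points $x^n_j \in A^n_j \subset \udomain$, and set the weights $\theta^n_j \dfn m_d(A^n_j)/m_d(A)$, which are manifestly independent of $H$. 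With this choice,
\[
  (f^H)_A - \sum_{j=1}^{\kappa(n)}\theta^n_j f^H(x^n_j)
  = \frac{1}{m_d(A)}\sum_{j=1}^{\kappa(n)}\int_{A^n_j}\bigl(f^H(x)-f^H(x^n_j)\bigr)\ud x,
\]
so the task reduces to bounding the $p$-th moment of this sum of ``cell errors'' (after first noting, via the freezing lemma and $\alpha$-stability exactly as in the earlier proofs, that $(f^H)_A$ is itself $p$-integrable, so the left-hand side is well-defined).

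The core ingredient is a per-cell estimate
\[
  \E\left[\Bigl|\int_{A^n_j}\bigl(f^H(x)-f^H(x^n_j)\bigr)\ud x\Bigr|^p\right]
  \lesssim m_d(A^n_j)^p\,\delta_n^{\lambda p}\bigl(|\log\delta_n|+1\bigr)^{p/\alpha},
\]
uniform in $H$. For $p \ge 1$ this follows from Jensen's inequality (convexity of $t\mapsto|t|^p$) to pull the integral outside, followed by the modulus of continuity applied at exponent $p < \alpha$. For $p < 1$ convexity is unavailable, so I would instead condition on the weights $(u_i,a_i)$ and use $\alpha$-stability to collapse the $v_i$-sum, reducing matters to $\E[|W_1|^\alpha]^{p/\alpha}$ with $W_1 = \int_{A^n_j}(\varphi_1(x)-\varphi_1(x^n_j))\ud x$, precisely the mechanism of Propositions~\ref{pr:measurable-sample-functions} and~\ref{pr:besov-integrability}. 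The remaining quantity $\E[|W_1|^\alpha]$ is then controlled by Minkowski's integral inequality when $\alpha \ge 1$, and when $\alpha < 1$ by the Hölder/sublinearity bound on $\varphi$ together with the power-law tails of $\|u_1\|$ (this last subcase being where the $\lambda = 1$ logarithm is produced, matching Proposition~\ref{pr:measurable-sample-functions}(\ref{en:modulus-of-continuity})).

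Finally I would sum the cell errors. For $p \ge 1$, Minkowski's inequality gives $\|\,\text{error}\,\|_{L^p(\P)} \lesssim \delta_n^{\lambda}(|\log\delta_n|+1)^{1/\alpha}\to 0$ with no further constraint. For $p<1$ I would use the subadditivity $\E[|\sum_j X_j|^p]\le\sum_j\E[|X_j|^p]$, obtaining a bound of order $\delta_n^{\lambda p}(|\log\delta_n|+1)^{p/\alpha}\sum_j m_d(A^n_j)^p$; with (near-)equal cells so that $m_d(A^n_j)\approx m_d(A)/\kappa(n)$ and $\delta_n\approx\kappa(n)^{-1/d}$, this is of order $\kappa(n)^{\,1-p(1+\lambda/d)}$ up to a logarithmic factor, which tends to $0$ precisely because $p > d/(d+\lambda)$. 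The main obstacle is exactly this last step: for $p<1$ the naive summation of cell errors diverges, and closing it is what forces the standing hypothesis $p > d/(d+\lambda)$. The other point requiring care is keeping every implicit constant independent of $H$ throughout, which is automatic once all estimates are routed through the $H$-uniform modulus of continuity.
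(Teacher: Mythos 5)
Your decomposition into per-cell quadrature errors is a genuinely different route from the paper's, which instead compares the piecewise-constant approximations $f^{H,n}$ at \emph{consecutive} dyadic levels, telescopes, and passes to the limit by Fatou's lemma together with an almost-everywhere convergence $f^{H,n} \to f^H$ on the product space. Both routes produce the same decay exponent $2^{n(d-(d+\lambda)p)}$ and both locate the role of the hypothesis $p > d/(d+\lambda)$ in exactly the same place, so for finite $H$ your argument goes through (your per-cell bound via the freezing lemma, the pathwise sup-bound on $\int_{A^n_j}|\varphi_1(x)-\varphi_1(x^n_j)|\,\ud x$, and the count $\sum_j m_d(A^n_j)^p \le \kappa(n)^{1-p} m_d(A)^p$, which incidentally holds by concavity for an arbitrary measurable $A$ without needing near-equal cells).

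The gap is the case $H=\infty$ with $p<1$, which the Lemma's uniformity claim requires and which is where it is used in Proposition \ref{pr:convergence-local-averages}. Your per-cell estimate in that branch is routed through the freezing lemma and the reduction to $\E[|W_1|^\alpha]^{p/\alpha}$, both of which need the explicit finite-width representation of $f^H$ as a sum over neurons; no such representation is available for the limit process, so your assertion that $H$-uniformity is ``automatic once all estimates are routed through the $H$-uniform modulus of continuity'' fails precisely here -- that branch is \emph{not} routed through the modulus of continuity. Nor can you recover the $H=\infty$ bound by discretizing the cell integral and using $p$-subadditivity plus Fatou: a cell $A^n_j$ meets $\approx 2^{(m-n)d}$ cubes of side $2^{-m}$, and the subadditivity loss $(\#\text{terms})^{1-p}$ diverges as $m\to\infty$. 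Closing this either requires invoking the explicit spectral characterization of the limiting finite-dimensional distributions from the cited convergence results (so that the pathwise sup-bound can be replayed under the limiting spectral measure), or restructuring the argument as the paper does: the telescoping comparison only ever involves a \emph{bounded} number ($2^d$) of point evaluations per parent cube, so every estimate reduces to moments of pointwise differences $\E[|f^H(x)-f^H(y)|^p]$, which Proposition \ref{pr:measurable-sample-functions} (\ref{en:modulus-of-continuity}) controls uniformly over all $H \in \nanu \cup \{\infty\}$.
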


\begin{proof}[Proof of Proposition \ref{pr:convergence-local-averages}]
Write $\mathcal{B}$ for the collection of balls in \eqref{eq:convergence-local-avgs}, and $(f^H)_{\mathcal{B}}$ for the vector on the left-hand side of \eqref{eq:convergence-local-avgs}. It suffices to show that for arbitrary scalars $\nu \dfn (\nu_i)_{i=1}^N \subset \real^N$ and an arbitrary bounded Lipschitz-continuous function $g \colon \real \to \real$, it holds that
\begin{equation} \label{eq:convergence-local-avgs-lipschitz}
  \lim_{H \to \infty} \E\left[ g\left( \nu^{\intercal}(f^H)_{\mathcal{B}}\right) \right] = \E\left[ g\left( \nu^{\intercal}(f^\infty)_{\mathcal{B}}\right) \right].
\end{equation}

For all $i \in \{1, 2, \cdots, N\}$, write $X^{n}_i \dfn (x^{n}_{j,i})_{j=1}^{\kappa(n,i)}$ for the collection of points given by Proposition \ref{pr:local-averages} for $A \dfn B_i$, and $\theta^{n}_i$ for the collection (vector) of scalars given by the same Proposition. Here and in the rest of this proof, we have abused notation by writing $f^H(X^{n}_i)$ for the vector obtained by applying $f^H$ pointwise in $X^{n}_i$.

We can then estimate
\begin{align}
  & \left| \E\left[ g\left( \nu^{\intercal}(f^H)_{\mathcal{B}}\right) \right] - \E\left[ g\left( \nu^{\intercal}(f^\infty)_{\mathcal{B}}\right) \right]\right|
  \notag
  \\
  \leq \; & \E\left[ \left|  g\left( \nu^{\intercal}(f^H)_{\mathcal{B}}\right) - g\left( \sum_{i=1}^{N}\nu_i (\theta^{n}_i)^\intercal f^H(X^{n}_i)\right)\right| \right] 
  \label{eq:convergence-local-averages-H}
  \\
  & \qquad + \E\left[ \left| g\left( \nu^{\intercal}(f^\infty)_{\mathcal{B}}\right) - g\left( \sum_{i=1}^N \nu_i (\theta^{n}_i)^\intercal f^\infty(X^{n}_i)\right)\right|\right] 
  \label{eq:convergence-local-averages-inf}
  \\
  & \qquad + \left| \E\left[ g\left( \sum_{i=1}^{N}\nu_i (\theta^{n}_i)^\intercal f^H(X^{n}_i)\right)\right] - \E\left[ g\left( \sum_{i=1}^{N}\nu_i (\theta^{n}_i)^\intercal f^\infty(X^{n}_i)\right)\right] \right|.
  \label{eq:convergence-local-averages-finite-dim}
\end{align}

By the assumptions on $g$, we have $|g(x) - g(y)| \leq c_{g} |x-y|^{p \land 1}$ for all $x$, $y \in \real$ and some constant $c_{g} > 0$. Thus for any given $\epsilon > 0$, we can use subadditivity and Proposition \ref{pr:local-averages} to find an approximation level $n$ such that the terms \eqref{eq:convergence-local-averages-H} and \eqref{eq:convergence-local-averages-inf} are $< \epsilon/2$ for all $H \in \nanu \cup \{\infty\}$. For this $n$, the term \eqref{eq:convergence-local-averages-finite-dim} converges to zero as $H \to \infty$ by the convergence \eqref{eq:convergence-finite-dims}. This establishes \eqref{eq:convergence-local-avgs-lipschitz}.
\end{proof}

We are now ready to present the proof of the main result of this paper.

\begin{proof}[Proof of Theorem \ref{th:besov-convergence}]
Part (\ref{en:thm-besov-integrability}) of the result is contained in Proposition \ref{pr:besov-integrability} (see also Remark \ref{re:besov-integrability-remarks} (\ref{en:random-besov-element})).

Concerning part (\ref{en:thm-law-convergence}), we recall from Proposition \ref{pr:convergence-local-averages} that the local averages of $f^H$ convergence in distribution to those of $f^{\infty}$ as $H \to \infty$. By Proposition \ref{pr:local-averages} in Appendix \ref{ap:auxiliary}, the collection of local averages is a separating family for $\pmspace(W^{s,p}(\udomain))$, so it follows that any $w$-convergent subsequence of $(\law(f^H))_{H \in \nanu}$ must converge to $\law(f^{\infty})$. This together with Proposition \ref{pr:relatively-compact} then implies that the sequence $(\law(f^H))_{H \in \nanu}$ must itself converge to $\law(f^{\infty})$.
\end{proof}

\section{Bayesian posterior analysis}\label{se:bayes-ip}

We return to the question of Bayesian posterior convergence, which we briefly discussed in Remark \ref{re:posterior-continuous} in the context of forward operators that are continuous on $W^{s,p}(\udomain)$. In this section we will refine this result so as to accommodate forward operators based on pointwise evaluations, which are not continuous and not even strictly well-defined operators in $W^{s,p}(\udomain)$.

Recall the basic setting, i.e.~the Bayesian inverse problem
\begin{equation}\label{eq:bayes-ip}
  \vec{u} = \mathcal{G}(f) + \vec{\varepsilon}_M,
\end{equation}
where $\mathcal{G}\colon W^{s,p}(\udomain) \to \real^M$ is the forward operator and $\vec{\varepsilon}_M$ is an independent $\real^M$-valued noise term (such as component-wise i.i.d Gaussian). Let $\pi \in \pmspace(W^{s,p}(\udomain))$ be a prior distribution for $f$. Under fairly general conditions (see e.g.~\cite[Theorem 6.31]{Stuart_2010} or \cite[Theorem 3.3]{Lasanen-2012-II}), the posterior distribution $\pi(\, \cdot \bigmid \vec{u}) \in \pmspace(W^{s,p}(\udomain))$ for given observations $\vec{u} \in \real^M$ exists as an absolutely continuous measure with respect to the prior $\pi$, with Radon-Nikodym derivative
\begin{equation}\label{eq:bayes-likelihood}
  \frac{\ud \pi(\, \cdot \bigmid \vec{u})}{\ud \pi}(f) \propto \pi(\vec{u} \bigmid f) \dfn \rho_M\bigl(\vec{u} - \mathcal{G}(f)\bigr),
\end{equation}
where $\rho_M$ is the density function of $\vec{\varepsilon}_M$, and the implicit proportionality constant is independent of $f$.

As the observation $\vec{u}$ has a continuous distribution on $\real^M$ in our model, the posterior distribution as a conditional probability measure is strictly speaking well-defined only for almost all $\vec{u} \in \real^M$. In our use case (Theorem \ref{th:bayes-ip} below), the density on the right-hand side of \eqref{eq:bayes-likelihood} is a bounded and continuous function of $\vec{u}$, so we may view $\pi(\, \cdot \bigmid \vec{u})$ defined by \eqref{eq:bayes-likelihood} as the canonical posterior for \emph{all} $\vec{u} \in \real^M$. We thus have $\pi(\, \cdot \bigmid \vec{u'}) \longrightarrow \pi(\, \cdot \bigmid \vec{u})$ in distribution (and e.g.~setwise and in the total variation metric etc.) as $\vec{u'} \to \vec{u}$. We refer to \cite[Section 1]{Lasanen-2012-II} for a thorough discussion and literature review concerning the question of existence and uniqueness of posterior distributions in the functional setting.

We will consider forward operators of the form
\begin{equation}\label{eq:forward-pointwise}
  \mathcal{G}(f) \dfn G\left( f(x_1), f(x_2), \cdots, f(x_N), f\right)
\end{equation}
for some fixed $(x_i)_{i=1}^{N} \subset \udomain$, where $G \colon \real^N \times W^{s,p}(\udomain) \to \real^M$ is continuous. 
In order to make sense of the Bayesian posterior with this type of forward operation, $\mathcal{G}$ should be a measurable function on $W^{s,p}(\udomain)$, so strictly speaking we should modify \eqref{eq:forward-pointwise} by replacing the evaluations $f(x_i)$ with e.g.
\[
  \liminf_{n \to \infty} f_{B_\udomain(x_i , 1/n)}
\]
(or zero in case this quantity is not finite), which as a function of $f$ is a pointwise limes inferior of continuous functions on $W^{s,p}(\udomain)$ and thus Borel-measurable. This modification is immaterial for the processes $f^H$, $H \in \nanu \cup \{\infty\}$, as demonstrated by the following Lemma.

\begin{lemma}\label{le:lebesgue-points}
  Let the assumptions on the activation function $\varphi$ be as in Theorem \ref{th:besov-convergence}, and $p \in (d/(d+\lambda), \alpha)$. Then for all $H \in \nanu \cup \{\infty\}$ and $x \in \udomain$,
  \[
    \lim_{r \to 0^+} (f^H)_{B_\udomain(x , r)} = f^H(x)
  \]
  with full probability, and
  \[
    \lim_{r \to 0^+} \E \left[ \left( |f^H - f^H(x)|_{B_\udomain(x , r)} \right)^p \right] = 0
  \]
  uniformly in $H \in \nanu\cup \{ \infty \}$.
\end{lemma}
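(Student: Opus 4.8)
The plan is to deduce both assertions from the uniform modulus of continuity in Proposition~\ref{pr:measurable-sample-functions}(\ref{en:modulus-of-continuity}), by controlling the average local oscillation
\[
  g_r \dfn |f^H - f^H(x)|_{B_\udomain(x,r)} = \frac{1}{m_d(B_\udomain(x,r))}\int_{B_\udomain(x,r)} |f^H(y)-f^H(x)|\,\ud y .
\]
For fixed $x \in \udomain$ and all small $r$ one has $B_\udomain(x,r)=B(x,r)\subset\udomain$ (the Euclidean ball), and by Proposition~\ref{pr:besov-integrability} the sample function $f^H(\omega,\cdot)$ lies in $L^p(\udomain)$ and is finite a.e.\ almost surely, while $f^H(x)$ is a.s.\ finite at the fixed point $x$, so $g_r$ and $(f^H)_{B_\udomain(x,r)}$ are well defined. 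Since $|(f^H)_{B_\udomain(x,r)}-f^H(x)|\le g_r$ pointwise, both the $L^p$ statement and the almost-sure statement will follow once I show that $\E[g_r^p]\to 0$ as $r\to 0^+$ uniformly in $H$.

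For the $L^p$ bound, the case $p\ge 1$ is immediate from Minkowski's integral inequality, which gives $\|g_r\|_{L^p(\P)}\le \frac{1}{m_d(B(x,r))}\int_{B(x,r)} \|f^H(y)-f^H(x)\|_{L^p(\P)}\,\ud y$, and the integrand is bounded by Proposition~\ref{pr:measurable-sample-functions}(\ref{en:modulus-of-continuity}). The substance is the range $p<1$ (which genuinely occurs, since $p$ may lie in $(d/(d+\lambda),1)$), where Minkowski is unavailable. There I would condition on the inner weights $W\dfn\{(u_i,a_i)\}_{i=1}^H$. Writing $\Delta_i(y)\dfn\varphi(u_i^\intercal y+a_i)-\varphi(u_i^\intercal x+a_i)$, the centred field is $f^H(y)-f^H(x)=H^{-1/\alpha}\sum_i\Delta_i(y)v_i$ given $W$, so $g_r$ is a conditionally deterministic seminorm evaluated at the symmetric $\alpha$-stable vector $(v_1,\dots,v_H)$. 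The key step is the sharp moment comparison for seminorms of stable vectors (in the spirit of \cite{ST94}), with a constant depending only on $p$ and $\alpha$: for $0<p<\alpha$,
\[
  \E\!\left[g_r^p\mid W\right]\;\lesssim\;\sigma_v^p\Bigl(\tfrac1H\textstyle\sum_{i=1}^H\delta_i^\alpha\Bigr)^{p/\alpha},
  \qquad
  \delta_i\dfn \frac{1}{m_d(B(x,r))}\int_{B(x,r)}|\Delta_i(y)|\,\ud y ,
\]
the exponents matching precisely because the network carries the factor $H^{-1/\alpha}$. Taking expectations and applying Hölder's inequality with exponent $\alpha/p\ge 1$ collapses the sum to $\E[g_r^p]\lesssim(\E[\delta_1^\alpha])^{p/\alpha}$; and $\E[\delta_1^\alpha]$ is bounded by exactly the integral already estimated in the proof of Proposition~\ref{pr:measurable-sample-functions}(\ref{en:modulus-of-continuity}) (bounding $\delta_1\le\sup_{y\in B(x,r)}|\Delta_1(y)|\le\psi(|u_1|\,r)$ with the Hölder/sublinear profile $\psi$ and integrating against the tail of $|u_1|$), yielding $\E[g_r^p]\lesssim r^{\lambda p}(|\log r|+1)^{p/\alpha}\to 0$ uniformly in $H$. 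The case $H=\infty$ follows either by applying the same seminorm estimate to the stable-integral representation of $f^\infty$, or as a limit of the finite-width bound as in the proof of Proposition~\ref{pr:measurable-sample-functions}.

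For the almost-sure statement I would fix $x$ and $H$, choose a geometric sequence $r_k\dfn 2^{-k}r_0$ with $B(x,r_0)\subset\udomain$, and use the bound above to get $\sum_k\E[g_{r_k}^p]<\infty$; by Tonelli this forces $\sum_k g_{r_k}^p<\infty$ almost surely, hence $g_{r_k}\to 0$ almost surely. Monotonicity of the integral upgrades this to the full continuum: for $r\in[r_{k+1},r_k]$ one has $B(x,r)\subset B(x,r_k)$ and $m_d(B(x,r))\ge 2^{-d}m_d(B(x,r_k))$, so $\sup_{r\in[r_{k+1},r_k]}g_r\le 2^d g_{r_k}$, whence $g_r\to 0$ and a fortiori $(f^H)_{B_\udomain(x,r)}\to f^H(x)$ almost surely as $r\to 0^+$.

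The main obstacle is the $L^p$ estimate in the heavy-tailed regime $p<1$. There every naive route fails: Jensen applied to the inner average points the wrong way, Minkowski's integral inequality is unavailable in the quasi-Banach setting, and the triangle inequality followed by $p$-subadditivity over the $H$ summands produces a factor $H^{1-p/\alpha}\to\infty$, destroying uniformity in $H$. The resolution is to keep the full stable correlation structure intact and invoke the moment comparison for seminorms of stable vectors; one can verify on the fully correlated and the disjointly supported extremes that this comparison is of the exact right order, so no uniformity in $H$ is lost. The remaining ingredients---Hölder, the tail computation for $|u_1|$, and the Borel--Cantelli/dyadic argument---are routine and parallel computations already carried out earlier in the paper.
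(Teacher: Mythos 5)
Your reduction of both claims to the uniform bound $\E[g_r^p]\to 0$ and your dyadic/Borel--Cantelli upgrade to the almost-sure statement are fine and essentially match the paper. The problem is the key step for $p<1$: the ``sharp moment comparison for seminorms of stable vectors'' you invoke, namely $\E[g_r^p\mid W]\lesssim\sigma_v^p\bigl(\tfrac1H\sum_i\delta_i^\alpha\bigr)^{p/\alpha}$ with a constant depending only on $p$ and $\alpha$, is false for $\alpha\geq 1$. What is true in general is only the equivalence of all moments of order $<\alpha$ of a seminorm of a stable vector among themselves; their common value is comparable to the total mass of the spectral measure raised to $1/\alpha$ only when the ambient normed space has stable type $\alpha$, and the $L^1$-average norm defining $g_r$ lives in an $L^1$-type space, which fails stable type $\alpha$ for $\alpha\geq 1$. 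Your own sanity check on the disjointly supported extreme is exactly where this breaks: if the $\Delta_i$ have disjoint supports in $B$ with $N(\Delta_i)=\delta$, then $g_r=H^{-1/\alpha}\delta\sum_i|v_i|$, and for $\alpha>1$ the law of large numbers gives $\sum_i|v_i|\approx H\,\E|v_1|$, so $\E[g_r^p]\approx\delta^p H^{p(1-1/\alpha)}$, which exceeds the claimed bound $\delta^p$ by a factor diverging in $H$ --- precisely the loss of uniformity you were trying to avoid. (For $\alpha<1$ your bound does hold, but then it follows from the crude triangle inequality $g_r\le H^{-1/\alpha}\sum_i\delta_i|v_i|$ together with tail summation for positive variables of index $<1$, not from a general seminorm comparison; $\alpha=1$ needs a logarithmic correction.)

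The lemma itself is of course still true, and there are two ways to repair your argument. For $\alpha>1$ (and $p<1$) you overlooked the easy route: first moments exist, so Fubini gives $\E[g_r]\le\sup_{y\in B_\udomain(x,r)}\E|f^H(y)-f^H(x)|\lesssim r^{\lambda}(|\log r|+1)^{1/\alpha}$ by Proposition \ref{pr:measurable-sample-functions}(\ref{en:modulus-of-continuity}) with exponent $1<\alpha$, and then $\E[g_r^p]\le(\E[g_r])^p$ by concavity --- outer Jensen points the right way here, and the bound is uniform in $H$ including $H=\infty$. The paper instead avoids any Banach-space stable theory altogether: it approximates $f^H$ by the piecewise-constant processes $f^{H,n}$ on dyadic cubes, applies $p$-subadditivity only to the boundedly many (resp.\ $\lesssim 2^{(m-n)d}$) cubes meeting $B_\udomain(x,r)$ at each scale $m\ge n$, and telescopes; the resulting geometric series converges precisely because $p>d/(d+\lambda')$, and the scheme treats all $H\in\nanu\cup\{\infty\}$ uniformly (whereas your conditioning on the finitely many inner weights does not directly extend to $H=\infty$ without invoking a stable-integral representation).
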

The proof shares similarities with that of the ``converse'' result Lemma \ref{le:integral-approximations}, and is presented in Appendix \ref{ap:auxiliary}.

\begin{theorem}\label{th:bayes-ip}
  Let $\varphi$ and $(p,s)$ be as in Theorem \ref{th:besov-convergence}, and let $\vec{u} \in \real^N$.
  Assume that the density function $\rho_M$ of the noise term $\vec{\varepsilon}_M$ in \eqref{eq:bayes-ip} is bounded and H\"older-continuous with some exponent $\lambda_\rho \in (0,1]$, and that its support is the entirety of $\real^M$. Assume that the function $G \colon \real^N \times W^{s,p}(\udomain) \to \real^M$ in \eqref{eq:forward-pointwise} is continuous, and uniformly H\"older-continuous with some exponent $\lambda_G \in (0,1]$ in the $\real^N$-component.

  Then, for the inverse problem \eqref{eq:bayes-ip} with forward operator \eqref{eq:forward-pointwise} and prior $\pi^H \dfn \law(f^H)$, we have the posterior convergence
  \[
    \pi^H(\, \cdot \bigmid \vec{u}) \stackrel{w}{\longrightarrow} \pi^\infty(\, \cdot \bigmid \vec{u})
  \]
  in $\pmspace(W^{s,p}(\udomain))$ as $H \to \infty$.
\end{theorem}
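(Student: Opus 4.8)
The plan is to reduce the claim to the weak convergence of priors already established in Theorem~\ref{th:besov-convergence}, by approximating the discontinuous forward operator $\mathcal{G}$ with continuous ones built from local averages. Writing $\pi(\vec{u} \bigmid f) = \rho_M(\vec{u} - \mathcal{G}(f))$ and the unnormalized posterior $\gamma^H(F \bigmid \vec{u}) = \int_{W^{s,p}(\udomain)} F(f)\, \pi(\vec{u} \bigmid f)\, \law(f^H)(\ud f)$ as in Remark~\ref{re:posterior-continuous}, it suffices to show that $\gamma^H(F \bigmid \vec{u}) \to \gamma^\infty(F \bigmid \vec{u})$ as $H \to \infty$ for every bounded continuous $F \colon W^{s,p}(\udomain) \to \real$, together with positivity of the limiting normalizing constant $\gamma^\infty(1 \bigmid \vec{u})$. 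The convergence of the normalized posteriors $\pi^H(F \bigmid \vec{u}) = \gamma^H(F \bigmid \vec{u})/\gamma^H(1 \bigmid \vec{u})$ then follows by taking ratios, which characterizes the asserted weak convergence.

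For $r > 0$ I would introduce the regularized forward operator
\[
  \mathcal{G}_r(f) \dfn G\bigl( (f)_{B_\udomain(x_1, r)}, \cdots, (f)_{B_\udomain(x_N, r)}, f \bigr),
\]
obtained by replacing each pointwise evaluation $f(x_i)$ by the local average over $B_\udomain(x_i, r)$. Since local-average functionals are continuous on $W^{s,p}(\udomain)$ (Proposition~\ref{pr:besov-properties}) and $G$, $\rho_M$, $F$ are continuous with $\rho_M$, $F$ bounded, the map $f \mapsto F(f)\, \rho_M(\vec{u} - \mathcal{G}_r(f))$ is bounded and continuous on $W^{s,p}(\udomain)$. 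Hence, defining $\gamma^H_r(F \bigmid \vec{u})$ by substituting $\mathcal{G}_r$ for $\mathcal{G}$, the weak convergence $\law(f^H) \stackrel{w}{\longrightarrow} \law(f^\infty)$ from Theorem~\ref{th:besov-convergence} yields, for each fixed $r$,
\[
  \gamma^H_r(F \bigmid \vec{u}) \longrightarrow \gamma^\infty_r(F \bigmid \vec{u}) \qquad (H \to \infty).
\]
The conclusion would then follow from a standard $3\epsilon$-argument, provided I establish the uniform approximation $\sup_{H \in \nanu \cup \{\infty\}} |\gamma^H_r(F \bigmid \vec{u}) - \gamma^H(F \bigmid \vec{u})| \to 0$ as $r \to 0$: choosing $r$ small makes the first and third terms of the splitting small uniformly in $H$, after which the middle term vanishes as $H \to \infty$ by the display above.

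This uniform approximation is the main obstacle, and it is where Lemma~\ref{le:lebesgue-points} enters decisively. Using boundedness of $F$, Hölder continuity of $\rho_M$ (exponent $\lambda_\rho$) and of $G$ in its $\real^N$-argument (exponent $\lambda_G$), together with the elementary bound $|(f^H)_{B_\udomain(x_i, r)} - f^H(x_i)| \leq |f^H - f^H(x_i)|_{B_\udomain(x_i, r)}$, one controls $|\gamma^H_r - \gamma^H|$ by the expectation of a quantity dominated by $\min\bigl(2\|\rho_M\|_\infty,\, c\,(W^H_r)^{\lambda_\rho}\bigr)$, where $W^H_r \dfn |\mathcal{G}_r(f^H) - \mathcal{G}(f^H)|$. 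By the uniform $L^p$-convergence in Lemma~\ref{le:lebesgue-points} and Markov's inequality, $W^H_r \to 0$ in probability uniformly in $H$. Splitting the expectation according to whether $W^H_r$ exceeds a small threshold $\delta$ — applying Hölder continuity on $\{W^H_r \leq \delta\}$ and the uniform bound $2\|\rho_M\|_\infty$ on $\{W^H_r > \delta\}$ — yields the uniform-in-$H$ estimate. The boundedness of $\rho_M$ is essential here: it circumvents any need to compare $\lambda_G \lambda_\rho$ with $p$, which could fail to satisfy $\lambda_G\lambda_\rho \leq p$ when $p$ is small.

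Finally, positivity of the normalizing constants is routine: taking $F \equiv 1$ above gives $\gamma^H(1 \bigmid \vec{u}) \to \gamma^\infty(1 \bigmid \vec{u})$, and the full-support assumption on $\rho_M$ forces $\gamma^\infty(1 \bigmid \vec{u}) > 0$, so for large $H$ the denominators are bounded away from zero. The ratios defining $\pi^H(\,\cdot \bigmid \vec{u})$ therefore converge for every bounded continuous $F$, which completes the proof.
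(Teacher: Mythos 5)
Your proposal is correct and follows essentially the same route as the paper's proof: regularize the pointwise evaluations by local averages to obtain a bounded continuous likelihood $\pi^r$, apply Theorem \ref{th:besov-convergence} at fixed $r$, and use Lemma \ref{le:lebesgue-points} for the uniform-in-$H$ approximation error, concluding via positivity of the normalizing constants. The only cosmetic difference is in how boundedness and H\"older continuity of the likelihood are combined (the paper raises the increment to the power $p \land \lambda_\rho\lambda_G$, while you truncate and split on the event $\{W^H_r > \delta\}$); both devices serve the same purpose of avoiding any constraint between $\lambda_\rho\lambda_G$ and $p$.
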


The assumptions on the noise density $\rho_M$ are fairly general, and they admit e.g.~Gaussian and component-wise i.i.d symmetric $\alpha$-stable vectors.

\begin{proof}
We first briefly note that the posterior distribution, given observations $\vec{u} \in \real^M$, exists in the sense of \eqref{eq:bayes-likelihood}, and that this is where the assumption about $\rho_M$ being bounded and supported on the entirety of $\real^M$ is needed. Namely, these assumptions quarantee that the posterior distribution is well-defined in the sense that
\begin{equation}\label{eq:posterior-well-defined}
  0 < \int_{W^{s,p}(\udomain)} \rho_M\bigl(\vec{u} - \mathcal{G}(f)\bigr) \, \pi^H(\ud f) < \infty
\end{equation}
for all $\vec{u} \in \real^M$ and $H \in \nanu\cup\{\infty\}$. This existence result can be found in \cite[Section 5.1]{Lasanen-2012-II} under the condition that the function
\[
  W^{s,p}(\udomain) \times \real^M \owns (f,\vec{u}) \mapsto \rho_M\bigl(\vec{u} - \mathcal{G}(f)\bigr) \in [0,\infty)
\]
is jointly Borel-measurable, as is the case under our assumptions.

We introduce the auxiliary notation
\[
  \pi_y(\vec{u} \bigmid f) \dfn \rho_M\bigl( \vec{u} - G(y,f) \bigr)
\]
for $y \in \real^N$ and $f \in W^{s,p}(\udomain)$. It follows from the assumptions that the function
\begin{equation} \label{eq:likelihood-lipschitz}
  \real^N \owns y \mapsto \pi_y(\vec{u} \bigmid f) \in [0,\infty)
\end{equation}
is bounded and H\"older-continuous with exponent $\lambda' \dfn \lambda_\rho \lambda_G \in (0,1]$ uniformly in $f$.

Recall from \eqref{eq:bayes-likelihood} that the unnormalized posterior $\gamma^H( \, \cdot \bigmid \vec{u})$ can for $H \in \nanu\cup\{\infty\}$ and bounded and continuous test functions $F \colon W^{s,p}(\udomain) \to \real$ be expressed by
\[
  \gamma^H(F \bigmid \vec{u}) = \int_{W^{s,p}(\udomain)} F(f) \pi(\vec{u} \bigmid f) \law(f^H)(\ud f),
\]
where
\[
  \pi(\vec{u} \bigmid f) = \pi_{(f(x_1),\cdots,f(x_N))}(\vec{u} \bigmid f)
\]
is the likelihood associated with the forward operator \eqref{eq:forward-pointwise}. For $r > 0$, we write
\[
  \pi^r(\vec{u} \bigmid f) \dfn \pi_{(f_{B_{\udomain}(x_1,r)}, \cdots, f_{B_{\udomain}(x_N,r)})}(\vec{u} \bigmid f).
\]
With fixed $r$, this is by assumption a bounded and continuous function of $f \in W^{s,p}(\udomain)$.

By the continuity properties of the function \eqref{eq:likelihood-lipschitz}, we have
\begin{align}
  & \left|\gamma^H(F \bigmid \vec{u}) - \gamma^\infty(F \bigmid \vec{u})\right|
  \notag
  \\
  \leq & \left| \int_{W^{s,p}(\udomain)} F(f) \left(\pi(\vec{u} \bigmid f) - \pi^r(\vec{u} \bigmid f) \right)\law(f^H)(\ud f) \right|
  \notag
  \\
  & \qquad + \left| \int_{W^{s,p}(\udomain)} F(f) \left(\pi(\vec{u} \bigmid f) - \pi^r(\vec{u} \bigmid f) \right)\law(f^\infty)(\ud f) \right|
  \notag
  \\
  & \qquad + \left| \int_{W^{s,p}(\udomain)} F(f) \pi^r(\vec{u} \bigmid f)\left\{\law(f^H)(\ud f) - \law(f^\infty)(\ud f) \right\}\right|
  \notag
  \\
  \lesssim & \sum_{i=1}^{N} \E \left[ \left( |f^H - f^H(x_i)|_{B_\udomain(x_i , r)} \right)^{p\land \lambda'} \right] \label{eq:posterior-lebesgue-1}
  \\
  & \qquad + \sum_{i=1}^{N} \E \left[ \left( |f^\infty - f^\infty(x_i)|_{B_\udomain(x_i , r)} \right)^{p \land \lambda'} \right]
  \label{eq:posterior-lebesgue-2}
  \\
  & \qquad + \left| \int_{W^{s,p}(\udomain)} F(f) \pi^r(\vec{u} \bigmid f)\left\{\law(f^H)(\ud f) - \law(f^\infty)(\ud f) \right\}\right|.
  \label{eq:posterior-r-approximation}
\end{align}
with some multiplicative constant independent of $H$ and $r > 0$. Given $\epsilon > 0$, we can by Lemma \ref{le:lebesgue-points} take $r$ so that the terms \eqref{eq:posterior-lebesgue-1} and \eqref{eq:posterior-lebesgue-2} are  $< \epsilon/2$ for all $H$. For this $r$, the term \eqref{eq:posterior-r-approximation} converges to zero as $H \to \infty$ by Theorem \ref{th:besov-convergence}.

This shows that
\[
  \lim_{H\to\infty} \gamma^H(F \bigmid \vec{u}) = \gamma^\infty(F \bigmid \vec{u})
\]
for all bounded and continuous $F \colon W^{s,p}(\udomain) \to \real$, and by \eqref{eq:posterior-well-defined}, the same convergence therefore holds for the normalized posteriors $\pi^H(\, \cdot \bigmid \vec{u})$.
\end{proof}

\section{Deeper architectures}\label{se:deeper}

A very natural question is whether the results in the previous sections, particularly the ones concerning functional convergence and Baye\-sian posterior consistency (Theorems \ref{th:besov-convergence} and \ref{th:bayes-ip}), generalize to perceptrons with more than one hidden layer. Here we establish this generalization for the case of Lipschitz-continuous activation functions, which allows for a convenient recursion of the sample regularity analysis through the hidden layers. Similar results could perhaps be attained for more general activations through a more sophisticated analysis of the distributions of the neurons at each hidden layer, but this is beyond the scope of this paper.

To set up the result, denote by $L \in \{2, 3, 4, \cdots\}$ the fixed number of hidden layers in our neural network, by $H_0 \dfn d \in \nanu$ the input dimension, by $H_{L+1} \dfn 1$ the output dimension and by $\vec{H} \dfn (H_1, H_2, \cdots, H_L) \in \nanu^L$ the widths of the hidden layers. Let $\vec{U} \in \real^{H_1 \times H_0}$ be a random matrix with i.i.d components distributed as $\stable{\alpha}{\sigma_u}$, for $\ell \in 1{:}L \dfn \{1, 2, \cdots, L\}$ let $\vec{V}^{(\ell)} \in \real^{H_{\ell+1} \times H_{\ell}}$ be a random matrix with i.i.d components distributed as $\stable{\alpha}{\sigma_v}$, and for $\ell \in 1{:}(L+1)$ let $\vec{a}^{(\ell)} \in \real^{H_\ell}$ be a random vector with i.i.d components distributed as $\stable{\alpha}{\sigma_a}$.

Our neural network is then defined sequentially through the layers as follows. Let
\[
  f^{(1)}(x) \dfn \left( f_1^{(1)}(x), \cdots, f_{H_1}^{(1)}(x) \right) \dfn\vec{U} x + \vec{a}^{(1)} \in \real^{H_1},
\]
and for $\ell \in 1{:}L$, let
\[
  f^{(\ell+1)}(x) \dfn \left( f_1^{(\ell+1)}(x), \cdots, f_{H_{\ell+1}}^{(\ell+1)}(x) \right) \dfn H_{\ell}^{-1/\alpha} \vec{V}^{(\ell)} \varphi\left( f^{(\ell)}(x) \right) + \vec{a}^{(\ell+1)} \in \real^{H_{\ell+1}},
\]
where $\varphi(\cdot) \in \real^{H_\ell}$ is interpreted as an elementwise action on a vector. We denote the ultimate scalar output as
\[
  f^{\vec{H}}(x) \dfn f^{(L+1)}(x). 
\]

The finite-dimensional convergence of these neural networks under some mild assumptions (satisfied by the conditions of Theorem \ref{th:deep} below) has been obtained in \cite[Theorem 2]{FFP23-deep-stable} and \cite[Theorem 3.1 and Section 4]{JLLY23}, in the sense that there exists an $\alpha$-stable process $(f^\infty(x))_{x \in \real^d}$ such that
\begin{equation}\label{eq:deep-convergence-finite-dims}
  \bigl( f^{\vec{H}}(x_1), f^{\vec{H}}(x_2), \cdots, f^{\vec{H}}(x_n)\bigr) \stackrel{d}{\longrightarrow} \bigl( f^\infty(x_1), f^\infty(x_2), \cdots, f^\infty(x_n)\bigr)
\end{equation}
as $\min(H_1, H_2, \cdots, H_L) \to \infty$ for all $\{x_1, x_2, \cdots, x_n\} \subset \real^d$. The cited results also describe a recursive formula for the finite-dimensional spectral measures of $f^{\infty}$, which we omit here.

Let us note that the convergence $\eqref{eq:deep-convergence-finite-dims}$ also holds under some additional hypotheses if the stability index $\alpha$ were to vary between the hidden layers. This however complicates both the analysis and the permissible parameter range for $\beta$ (see the discussion in \cite[Section 4]{JLLY23}), so we will only consider uniformly $\alpha$-stably weighted networks in the following result.

\begin{theorem}\label{th:deep}
Let $\udomain$ and $\varphi$ be as in Theorem \ref{th:besov-convergence}, with $\varphi$ Lipschitz-continuous (i.e.~$\lambda = 1$), and let $\alpha \in (d/(d+1), 2)$. Then the statements of Theorems \ref{th:besov-convergence} and \ref{th:bayes-ip} continue to hold with $f^{\vec{H}}$ in place of $f^H$, and limiting process $f^{\infty}$ for $\min(H_1,\cdots,H_L) \longrightarrow \infty$ described by \eqref{eq:deep-convergence-finite-dims}.
\end{theorem}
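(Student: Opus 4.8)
The plan is to mirror the single-hidden-layer argument of Sections \ref{se:sample-regularity}--\ref{se:bayes-ip}, the key observation being that with a Lipschitz activation ($\lambda = 1$) the entire edifice rests on a single ingredient: the uniform modulus-of-continuity estimate of Proposition \ref{pr:measurable-sample-functions} (\ref{en:modulus-of-continuity}). Once I establish the analogous estimate for the deep network $f^{\vec{H}}$, namely that for $p \in (0,\alpha)$ and $|x-y| \leq 1$,
\[
  \sup_{\vec{H}} \E\left[ |f^{\vec{H}}(x) - f^{\vec{H}}(y)|^p \right]
  \lesssim |x-y|^p \left(|\log|x-y||+1\right)^{p/\alpha},
\]
together with an $L^p$-energy bound $\sup_{\vec{H}}\E[|f^{\vec{H}}(x)|^p] < \infty$ uniform over $x \in \udomain$, the rest of the proof is verbatim: Propositions \ref{pr:besov-integrability}, \ref{pr:relatively-compact}, \ref{pr:convergence-local-averages}, the measurability constructions, Lemmas \ref{le:integral-approximations} and \ref{le:lebesgue-points}, and both Theorems \ref{th:besov-convergence} and \ref{th:bayes-ip} only ever invoke the finite-dimensional convergence (now \eqref{eq:deep-convergence-finite-dims}) and these two uniform moment estimates. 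So the substance of the proof is isolated in the regularity of the sample functions, and the recursion over layers is where all the work happens.

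First I would set up an inductive claim over the hidden layers $\ell \in 1{:}(L+1)$: for each coordinate $f^{(\ell)}_k$ and each $p \in (0,\alpha)$, both a uniform $L^p$-bound $\sup \E[|f^{(\ell)}_k(x)|^p] < \infty$ (locally in $x$) and a Lipschitz-type modulus estimate $\sup \E[|f^{(\ell)}_k(x) - f^{(\ell)}_k(y)|^p] \lesssim |x-y|^p(|\log|x-y||+1)^{p/\alpha}$ hold uniformly in the widths. The base case $\ell = 1$ is immediate since $f^{(1)}(x) = \vec{U}x + \vec{a}^{(1)}$ is affine with $\alpha$-stable coefficients, so the increment $f^{(1)}_k(x) - f^{(1)}_k(y)$ is $\alpha$-stable with scale proportional to $\|x-y\|_{\ell^\alpha}$, giving the clean $|x-y|^p$ bound. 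For the inductive step, conditioning on layer $\ell$ and using the freezing lemma together with the $\alpha$-stability of the $\vec{V}^{(\ell)}$-weights reduces (exactly as in \eqref{eq:modulus-holder}) the moment of the increment of $f^{(\ell+1)}_k$ to
\[
  \E\left[ |f^{(\ell+1)}_k(x) - f^{(\ell+1)}_k(y)|^p \right]
  \lesssim
  \E\left[ |\varphi(f^{(\ell)}_1(x)) - \varphi(f^{(\ell)}_1(y))|^\alpha \right]^{p/\alpha},
\]
and the Lipschitz bound $|\varphi(a) - \varphi(b)| \leq c_\varphi |a-b|$ (valid for all scales when $\lambda = 1$, $\beta = 1$ is forced, but the sublinearity only enters the $L^p$-bound) collapses this to $\E[|f^{(\ell)}_1(x) - f^{(\ell)}_1(y)|^\alpha]^{p/\alpha}$, which the induction hypothesis controls. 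The $L^p$-bound propagates similarly via the sublinear growth of $\varphi$, as in the proof of Proposition \ref{pr:besov-integrability} (\ref{en:besov-integrability}).

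The main obstacle — and the only genuinely delicate point — is the propagation of the logarithmic factor through the layers. Because $\lambda = 1$ is the borderline case, each layer contributes a factor $(|\log|x-y||+1)^{p/\alpha}$, and a naive induction would accumulate these into a power $(|\log|x-y||+1)^{Lp/\alpha}$ growing with depth; I must verify this is harmless. The resolution is that the logarithm enters only in the final smoothness integral of Proposition \ref{pr:besov-integrability}, where one integrates $|x-y|^{(\lambda - s)p - d} \cdot (\text{log power})$ over $\udomain \times \udomain$, and since $s < \lambda = 1$ strictly the polynomial factor $|x-y|^{(1-s)p}$ dominates \emph{any} fixed power of the logarithm near the diagonal, keeping the integral finite. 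Concretely I would carry an exponent $\gamma_\ell$ in the induction, $\E[\,\cdots\,] \lesssim |x-y|^p(|\log|x-y||+1)^{\gamma_\ell}$, show $\gamma_{\ell+1} = \gamma_\ell + p/\alpha$ (or a similar bound), note $\gamma_{L+1}$ is a finite constant depending on $L$, and then observe that the choice $s \in (0,1)$ in Proposition \ref{pr:besov-integrability} still yields a convergent integral $\int_{\udomain\times\udomain} |x-y|^{(1-s)p - d}(|\log|x-y||+1)^{\gamma_{L+1}}\,\ud x\,\ud y < \infty$ for any fixed $\gamma_{L+1}$, since the constraint $\alpha > d/(d+1) = d/(d+\lambda)$ guarantees one can pick $p < \alpha$ and $s < 1$ with $(1-s)p > 0$ and $p > d/(d+s)$. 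With these two uniform estimates in hand, I would state explicitly that Propositions \ref{pr:measurable-sample-functions}--\ref{pr:convergence-local-averages} and the proofs of Theorems \ref{th:besov-convergence} and \ref{th:bayes-ip} apply with no further change, completing the argument.
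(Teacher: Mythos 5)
Your overall strategy (prove a uniform modulus-of-continuity estimate and a uniform $L^p$-bound for $f^{\vec{H}}$ by induction over the layers, then observe that the rest of the machinery of Sections \ref{se:sample-regularity}--\ref{se:bayes-ip} goes through unchanged) is exactly the paper's strategy, and your base case is fine. However, your inductive step contains a genuine gap at precisely the point where the paper's proof has to do something nontrivial. After the freezing-lemma reduction you propose to apply the Lipschitz bound $|\varphi(a)-\varphi(b)|\le c_\varphi|a-b|$ at all scales and thereby collapse the estimate to $\E\bigl[|f^{(\ell)}_1(x)-f^{(\ell)}_1(y)|^{\alpha}\bigr]^{p/\alpha}$, claiming the induction hypothesis controls this. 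It does not: the induction hypothesis (like Proposition \ref{pr:measurable-sample-functions} (\ref{en:modulus-of-continuity})) only covers moments of order strictly below $\alpha$, and the $\alpha$-th moment here is genuinely infinite --- already at the first layer $f^{(1)}_k(x)-f^{(1)}_k(y)$ is exactly symmetric $\alpha$-stable, and $\alpha$-stable variables have $\E[|u|^q]<\infty$ only for $q<\alpha$. So your upper bound is $+\infty$ and the induction dies at the second step. Relatedly, your parenthetical ``$\beta=1$ is forced'' is wrong (the hypothesis is $\beta\in[0,1)$, and $\lambda=1$ with $\beta=0$ is realized by, e.g., $\tanh$), and your claim that the sublinearity at large scales ``only enters the $L^p$-bound'' is also wrong --- it is essential for the modulus estimate, exactly as in the shallow proof where the regime $|u_1^{\intercal}(x-y)|\ge 1$ needs $\beta<1$ to make the tail integral converge.

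The paper's resolution is to combine the small-scale Lipschitz bound with the large-scale $\beta$-H\"older bound into a single global estimate $|\varphi(a)-\varphi(b)|\lesssim|a-b|^{c}$ valid for any $c\in(\beta,1)$, which reduces the inductive step to $\E\bigl[|f^{(\ell)}_1(x)-f^{(\ell)}_1(y)|^{c\alpha}\bigr]^{p/\alpha}$ with $c\alpha<\alpha$ --- a finite moment the induction hypothesis does control. The resulting exponent is $cp-\epsilon p/\alpha$ (writing the hypothesis as $|x-y|^{p'-\epsilon}$, with the logarithm absorbed into the polynomial loss $\epsilon$ from the outset, rather than tracking log powers $\gamma_\ell$ as you propose), and the point is that since $p/\alpha<1$ one may choose $c$ close enough to $1$ that $cp-\epsilon p/\alpha>p-\epsilon$, so the form of the estimate is preserved through arbitrarily many layers. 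Your observation that any fixed power of the logarithm is harmless in the final smoothness integral is correct as far as it goes, but it does not repair the inductive step, which is where the actual difficulty lies.
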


\begin{proof}
Key to much of our analysis in the shallow regime is the uniform modulus of continuity estimate in Proposition \ref{pr:measurable-sample-functions} (\ref{en:modulus-of-continuity}). We will establish a weaker version of this estimate for deep networks with Lipschitz activation, which nevertheless is qualitatively close enough in a certain sense.

We begin by noting that for each $i \in 1{:}H_2$ and all $x$, $y \in \udomain$,
\[
  f^{(2)}_i(x) - f^{(2)}_i(y)
\]
is distributed as $f^H(x) - f^H(y)$, where $f^H$ is the shallow network considered in Section \ref{se:sample-regularity}. Thus, by Proposition \ref{pr:measurable-sample-functions} (\ref{en:modulus-of-continuity}), we have
\begin{equation}\label{eq:deep-first-hidden-layer}
  \E\left[ |f^{(2)}_i(x) - f^{(2)}_i(y)|^p \right] \lesssim |x - y|^{p - \epsilon}
\end{equation}
for all $x$, $y \in \udomain$, $p \in (0, \alpha)$ and $\epsilon \in (0,p)$ close to $0$ (to be specified below). The implicit constant here depends (among other things) on $\alpha$, $p$, $\epsilon$ and $\udomain$, but not on $(x,y)$ nor the widths $\vec{H}$.

Let then $c \in (\beta, 1)$ be close to $1$ (again, to be specified below). By the continuity and growth criteria on $\varphi$, it is immediate that
\begin{equation}\label{eq:almost-lipschitz}
  |\varphi(x) - \varphi(y)| \lesssim |x - y|^c
\end{equation}
for all $x$, $y \in \real$, with an implicit constant that does not depend on $(x,y)$. With this estimate, we can for any $i \in 1{:}H_3$ carry out the following estimates using the freezing lemma, as in the proof of Proposition \ref{pr:measurable-sample-functions}:
\begin{align*}
  & \E\left[ |f^{(3)}_i(x) - f^{(3)}_i(y)|^p \right]
  \\
  = \, & \E\left[ \left| \sum_{j=1}^{H_2} \frac{\varphi\bigl(f^{(2)}_j(x)\bigr) - \varphi\bigl(f^{(2)}_j(y)\bigr)}{H_2^{1/\alpha}} \vec{V}^{(2)}_{i,j} \right|^p \right]
  \\
  = \, & \E\left[ |\vec{V}^{(2)}_{1,1} |^p\right] \E\left[ \left( \frac{1}{H_2} \sum_{j=1}^{H_2} \bigl|\varphi\bigl(f^{(2)}_j(x)\bigr) - \varphi\bigl(f^{(2)}_j(y)\bigr)\bigr|^\alpha \right)^{p/\alpha} \right]
  \\
  \leq \, & \E\left[ |\vec{V}^{(2)}_{1,1} |^p\right] \left( \frac{1}{H_2} \sum_{j=1}^{H_2} \E\left[ \bigl| \varphi\bigl(f^{(2)}_j(x)\bigr) - \varphi\bigl(f^{(2)}_j(y)\bigr) \bigr|^\alpha \right]\right)^{p/\alpha}
  \\
  \lesssim \, & \left( \frac{1}{H_2} \sum_{j=1}^{H_2} \E\left[ |f^{(2)}_j(x) - f^{(2)}_j(y)|^{c\alpha} \right] \right)^{p/\alpha} \lesssim |x-y|^{cp - \epsilon(p/\alpha) },
\end{align*}
where in the second-to-last inequality we used \eqref{eq:almost-lipschitz}, and in the last one we used \eqref{eq:deep-first-hidden-layer} with $c\alpha < \alpha$ in place of $p$ -- here $\epsilon$ has to be small enough so that $\epsilon < c\alpha$. Since $c \in (\beta, 1)$ was arbitrary, and $p/\alpha < 1$, we can take $c$ so that $cp - \epsilon(p/\alpha) > p - \epsilon$, yielding
\[
  \E\left[ |f^{(3)}_i(x) - f^{(3)}_i(y)|^p \right] \lesssim |x - y|^{p - \epsilon}
\]
for all $i \in 1{:}H_3$, which is of the same form as \eqref{eq:deep-first-hidden-layer} with $f^{(3)}$ in place of $f^{(2)}$.

We can induct on this idea in an obvious manner, culminating in
\begin{equation}\label{eq:deep-modulus-of-continuity}
  \E\left[ |f^{\vec{H}}(x) - f^{\vec{H}}(y)|^p \right] \lesssim |x - y|^{p - \epsilon}
\end{equation}
for all $x$, $y \in \udomain$ and any sufficiently small $\epsilon > 0$. To be clear, the implicit multiplicative constant depends among other things on $L$, $\alpha$, $p$, $\epsilon$ and $\udomain$, but \emph{not} on $(x,y)$ nor the widths $\vec{H}$. In particular, the uniformity with respect to $\vec{H}$ allows us to extend \eqref{eq:deep-modulus-of-continuity} to the limit process $f^{\infty}$ as in the proof of Proposition \ref{pr:measurable-sample-functions}.

The idea is then to use \eqref{eq:deep-modulus-of-continuity} instead of Proposition \ref{pr:measurable-sample-functions} (\ref{en:modulus-of-continuity}) with a sufficiently small $\epsilon > 0$ whenever needed -- this will always be possible due to the open-ended nature of the parameter ranges appearing in our results. In fact, we have done this kind of parameter nudging even in the shallow regime in the proofs of Lemmas \ref{le:integral-approximations} and \ref{le:lebesgue-points} in the Lipschitz case, in order to avoid dealing with the logarithmic term in Proposition \ref{pr:measurable-sample-functions} (\ref{en:modulus-of-continuity}).

Let us sketch a proof of the counterpart of Proposition \ref{pr:besov-integrability} (\ref{en:besov-integrability}) as an example.

First, a measurable version of each $f^{\vec{H}}$, $\vec{H} \in \nanu^L \cup \{\infty\}$, exists as in Proposition \ref{pr:measurable-sample-functions} (\ref{en:measurable-version}), which justifies the use of Fubini's theorem in what follows. Let $s \in (0,1)$ and $p \in (d/(d+s), \alpha)$. That
\[
  \sup_{\vec{H} \in \nanu^L} \E\left[ \left\| f^{\vec{H}} \right\|^p_{L^p(\udomain)} \right]
  = \sup_{\vec{H} \in \nanu^L} \int_{\udomain} \E\left[ \left| f^{\vec{H}}(x) \right|^p\right] \ud x
  < \infty
\]
can be proven by inductively estimating the integrand through the hidden layers as above, using the sub-polynomial growth of $\varphi$, and this uniform estimate extends to $f^{\infty}$ in the same way as \eqref{eq:deep-modulus-of-continuity}.

Then, for $\vec{H} \in \nanu^L \cup \{\infty\}$, we can use \eqref{eq:deep-modulus-of-continuity} to estimate
\begin{align*}
  \E\left[ \left\| f^{\vec{H}} \right\|^p_{\dot{W}^{s,p}(\udomain)} \right]
  & = \int_{\udomain \times \udomain} |x-y|^{-sp - d} \E\left[ |f^{\vec{H}}(x) - f^{\vec{H}}(y)|^p \right] \ud x \ud y
  \\
  & \lesssim \int_{\udomain \times \udomain} |x-y|^{-sp-d + p - \epsilon} \ud x \ud y,
\end{align*}
with an implicit constant independent of $\vec{H}$. Since $s < 1$, the parameter $\epsilon > 0$ can be taken smaller than $p-sp$, so that the latter double integral is finite.
\end{proof}

\appendix
\section{Properties of Sobolev-Slobodeckij spaces}\label{ap:properties-besov}
Here we collect some useful properties of the Sobolev-Slobodeckij spaces $W^{s,p}$, along with references.

Let $\udomain \subset \real^d$ be a bounded domain in the Euclidean space with $C^\infty$-smooth boundary. It is not hard to see that the domain $\udomain$ is then also \emph{Ahlfors $d$-regular}, in the sense that
\begin{equation}\label{eq:ahlfors-regular}
  m_d\left( B_\udomain(x, r) \right) \approx r^d
\end{equation}
for all $x \in \udomain$ and $r \in (0, \mathop{\mathrm{diam}}(\udomain)]$ with some multiplicative constants indepedent of $(x,r)$, where $m_d$ and $B_\udomain$ stand for the Lebesgue $d$-measure and the Euclidean ball restricted to $\udomain$.

For $s \in (0,1)$ and $p > d/(d+s)$, we then recall the homogeneous function space $\dot{W}^{s,p}(\udomain)$, defined in terms of the quasinorm (modulo additive constants)

\[
  \|f\|_{\dot{W}^{s,p}(\udomain)} \dfn \left( \int_{\udomain \times \udomain} \frac{|f(x)-f(y)|^p}{|x-y|^{sp+d}} \ud x \ud y\right)^{1/p}.
\]
The space $W^{s,p}(\udomain)$ is then defined as $L^p(\udomain) \cap \dot{W}^{s,p}(\udomain)$, with proper quasinorm
\begin{equation}\label{eq:besov-quasinorm}
  \|f\|_{W^{s,p}(\udomain)} \dfn \|f\|_{L^p(\udomain)} + \|f\|_{\dot{W}^{s,p}(\udomain)}.
\end{equation}

\begin{proposition}\label{pr:besov-properties}
Let $s \in (0,1)$ and $p > d/(d+s)$. We have the following structural properties for the space $W^{s,p}(\udomain)$.

\begin{enumerate}[(i)]
  \item\label{en:equivalent-quasinorms}
  The space $W^{s,p}(\udomain)$ coincides with the space of distributions $B^s_{p,p}(\udomain)$ considered in \cite[Chapter 3]{triebel-I} and the space $\mathcal{F}^s_{p,p}(\udomain)$ of locally integrable functions considered in \cite[Definition 5.1]{SS17}, all with equivalent quasinorms.

  \item \label{en:L1-embedding}
  For $d/(d+s) < p < 1$, the the functions in $W^{s,p}(\udomain)$ are integrable, and the term $\|f\|_{L^p(\udomain)}$ in \eqref{eq:besov-quasinorm} can be replaced with $\|f\|_{L^1(\udomain)}$, resulting in the same space with equivalent quasinorms.

  \item \label{en:jawerth-embedding}
  For $p' > p$ and $s' \in (0,s)$ such that $s' - d/p' = s-d/p$, we have
  \begin{equation}\label{eq:jawerth-embedding}
    W^{s,p}(\udomain) \subset W^{s',p'}(\udomain)
  \end{equation}
  with a continuous embedding (it automatically holds that $p' > d/(s'+d)$).

  \item \label{en:rellich-kondrachov}
  For $p' > p$ and $s' \in (0,s)$ such that $s' - d/p' > s-d/p$, the embedding \eqref{eq:jawerth-embedding} is compact (it automatically holds that $p' > d/(s'+d)$).

  \item \label{en:separable-metric}
  Endowed with
  \[
    (f, g) \mapsto \|f-g\|_{W^{s,p}(\udomain)}^{p\land 1},
  \]
  $W^{s,p}(\udomain)$ is a complete and separable metric space.
\end{enumerate}
\end{proposition}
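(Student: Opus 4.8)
The whole proposition is a compendium of known facts about Besov-type function spaces, and the plan is to reduce everything to part (\ref{en:equivalent-quasinorms}) and then transport the standard Besov machinery through the resulting identification. The crux is therefore (\ref{en:equivalent-quasinorms}): one must show that the Slobodeckij (Gagliardo) quasinorm \eqref{eq:besov-quasinorm} is equivalent to the Fourier-analytic Besov quasinorm of $B^s_{p,p}(\udomain)$. For $p \geq 1$ and $0 < s < 1$ this intrinsic difference characterization is entirely classical, and I would simply cite the relevant results in \cite[Chapter 3]{triebel-I}. The genuinely delicate range is the quasi-Banach one $d/(d+s) < p < 1$, where the textbook arguments (relying on duality and on boundedness of maximal and singular-integral operators on $L^p$) break down. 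Here the plan is to invoke \cite[Definition 5.1]{SS17}: the space $\mathcal{F}^s_{p,p}$ defined there is, by construction, exactly the collection of locally integrable functions with finite quasinorm \eqref{eq:besov-quasinorm}, so the identification with $W^{s,p}(\udomain)$ is definitional, and the equivalence with $B^s_{p,p}(\udomain)$ is precisely the content of the equivalence theorem accompanying that definition. The hypotheses that $\udomain$ be bounded with smooth boundary (hence Ahlfors $d$-regular and an extension domain, cf.\ \eqref{eq:ahlfors-regular}) are what make these intrinsic characterizations and the restriction/extension identity $B^s_{p,p}(\udomain) = B^s_{p,p}(\real^d)|_{\udomain}$ available, and the lower bound $p > d/(d+s)$ is exactly the threshold below which such function-space identities fail (cf.\ Remark \ref{re:parameter-ranges}). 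I expect this identification to be the main obstacle; everything downstream is a transfer of off-the-shelf Besov theory.

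With (\ref{en:equivalent-quasinorms}) in hand, part (\ref{en:L1-embedding}) follows from the Sobolev embedding for Besov spaces. In the range $p > d/(d+s)$ one has $W^{s,p}(\udomain) \hookrightarrow L^{p^\ast}(\udomain)$ with $1/p^\ast = 1/p - s/d$ when $sp < d$ (and into a space of continuous functions when $sp \geq d$); since $p > d/(d+s)$ is equivalent to $p^\ast > 1$ and $\udomain$ has finite measure, this gives $W^{s,p}(\udomain) \hookrightarrow L^{p^\ast}(\udomain) \hookrightarrow L^1(\udomain)$, so functions in $W^{s,p}(\udomain)$ are integrable. For the quasinorm equivalence, the bound $\|f\|_{L^p(\udomain)} \lesssim \|f\|_{L^1(\udomain)}$ is immediate from H\"older's inequality on the finite-measure domain (for $p < 1$, higher integrability controls lower), while the reverse bound $\|f\|_{L^1(\udomain)} \lesssim \|f\|_{L^p(\udomain)} + \|f\|_{\dot W^{s,p}(\udomain)}$ is exactly the embedding just invoked; since both candidate quasinorms share the same homogeneous term, they are equivalent.

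Parts (\ref{en:jawerth-embedding}) and (\ref{en:rellich-kondrachov}) are the continuous and compact Sobolev embeddings, read off the Besov scale via (\ref{en:equivalent-quasinorms}). For (\ref{en:jawerth-embedding}) I would quote the Sobolev-type embedding $B^s_{p,p}(\udomain) \hookrightarrow B^{s'}_{p',p'}(\udomain)$ on the critical line $s - d/p = s' - d/p'$ with $p \leq p'$ (the comparison of fine indices is harmless since $p \leq p'$), again from the theory on bounded smooth domains in \cite{triebel-I}; the parenthetical admissibility $p' > d/(d+s')$ is a one-line computation from $1/p' = 1/p + (s'-s)/d$ together with $p > d/(d+s)$. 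For (\ref{en:rellich-kondrachov}), the strict inequality in the differential-dimension relation puts the embedding strictly off the critical line, which is precisely the Rellich--Kondrachov regime; on a bounded $C^\infty$ domain the compactness is a standard consequence of the compact-embedding theorems for Besov spaces (e.g.\ in \cite{triebel-I}) and transports through (\ref{en:equivalent-quasinorms}).

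Finally, part (\ref{en:separable-metric}) records the metric structure. Positivity and symmetry are clear once one works, as usual, with equivalence classes modulo a.e.\ equality (the implication $\|f\|_{W^{s,p}(\udomain)} = 0 \Rightarrow f = 0$ a.e.\ uses $p > d/(d+s)$ via (\ref{en:L1-embedding})). The triangle inequality reduces to showing that $\|\cdot\|_{W^{s,p}(\udomain)}$ is a $(p \wedge 1)$-norm: for $p \geq 1$ it is an honest norm, while for $p < 1$ both $\|\cdot\|_{L^p(\udomain)}$ and $\|\cdot\|_{\dot W^{s,p}(\udomain)}$ are $p$-norms (their $p$-th powers are subadditive, from $|a+b|^p \leq |a|^p + |b|^p$), and the sum of two $p$-norms is again a $p$-norm by the reverse Minkowski inequality for $0 < p < 1$; hence $(f,g) \mapsto \|f - g\|_{W^{s,p}(\udomain)}^{p \wedge 1}$ is a genuine metric. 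Completeness and separability I would inherit from the quasi-Banach space $B^s_{p,p}(\udomain)$ through (\ref{en:equivalent-quasinorms}): completeness is the statement that this is a quasi-Banach space, and separability follows from density of smooth functions together with the finiteness of the fine index ($q = p < \infty$), both standard in this range \cite{triebel-I, SS17}.
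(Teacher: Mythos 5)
Your overall strategy --- reduce everything to the identification in part (\ref{en:equivalent-quasinorms}) and then transport off-the-shelf Besov theory --- is exactly the paper's; parts (\ref{en:L1-embedding})--(\ref{en:separable-metric}) are handled the same way (the paper simply cites \cite[Theorem 1.118, Remark 1.96 and Theorem 1.97]{triebel-III} where you re-derive (\ref{en:L1-embedding}) from the Sobolev embedding by hand, which amounts to the same computation, and your $(p\land 1)$-subadditivity argument for the metric in (\ref{en:separable-metric}) is identical).

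The one genuine gap is your treatment of $\mathcal{F}^s_{p,p}(\udomain)$ in part (\ref{en:equivalent-quasinorms}). The space in \cite[Definition 5.1]{SS17} is \emph{not} defined by the Gagliardo double integral \eqref{eq:besov-quasinorm}; it is defined via fractional Haj{\l}asz-type gradients (sequences of nonnegative functions controlling first differences at dyadic scales), so the coincidence with $W^{s,p}(\udomain)$ is not definitional, and there is no single ``equivalence theorem accompanying that definition'' to invoke. The paper establishes the identification through a chain of four nontrivial steps: $W^{s,p}(\udomain)=\dot B^s_{p,p}(\udomain)\cap L^p(\udomain)$ with $\dot B^s_{p,p}$ as in \cite[Definition 1.1]{GKZ13}, then $\dot B^s_{p,p}(\udomain)=\dot M^s_{p,p}(\udomain)$ by \cite[Theorem 1.2]{GKZ13}, then $\dot M^s_{p,p}(\udomain)=\dot{\mathcal F}^s_{p,p}(\udomain)$ by \cite[Proposition 3.1]{BSS18}, and finally $\dot{\mathcal F}^s_{p,p}(\udomain)\cap L^p(\udomain)=\mathcal F^s_{p,p}(\udomain)$ by \cite[Proposition 5.2]{SS17}. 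Relatedly, you do not need to route the quasi-Banach case $p<1$ through \cite{SS17} at all: the intrinsic difference characterization of $B^s_{p,p}$ in \cite[Proposition 3.4.2]{triebel-I} already covers the full range $p>d/(d+s)$ (equivalently $s>d(1/p-1)_+$), which is the identification actually needed for parts (\ref{en:L1-embedding})--(\ref{en:separable-metric}).
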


The classic monograph \cite[Chapter 3]{triebel-I} contains many of the results mentioned above. In the interest of accessibility, we present here a more precise list of sources for each of the claims in the above Proposition.

\begin{enumerate}[(i)]
  \item That $W^{s,p}(\udomain)$ coincides with the distribution space $B^s_{p,p}(\udomain)$ is contained in \cite[Proposition 3.4.2]{triebel-I}. Concerning the space $\mathcal{F}^s_{p,p}(\udomain)$, we first note that $W^{s,p}(\udomain)$ by definition agrees with the space $\dot{B}^s_{p,p}(\udomain) \cap L^p(\udomain)$, where the homogeneous space $\dot{B}^s_{p,p}(\udomain)$ is as defined in \cite[Definition 1.1]{GKZ13}. Then by \cite[Theorem 1.2 and Definition 3.1]{GKZ13}, $\dot{B}^s_{p,p}(\udomain)$ coincides with a certain function space $\dot{M}^s_{p,p}(\udomain)$ defined in terms of fractional gradients. By \cite[Proposition 3.1]{BSS18}, this space in turn coincides with the homogeneous space $\dot{\mathcal{F}}^s_{p,p}(\udomain)$, and by \cite[Proposition 5.2]{SS17} we have $\dot{\mathcal{F}}^s_{p,p}(\udomain)\cap L^p(\udomain) = \mathcal{F}^s_{p,p}(\udomain)$ with equivalent quasinorms.

  \item This result for the distribution space $B^s_{p,p}(\udomain)$ is contained in \cite[Theorem 1.118 (ii)]{triebel-III} (see also the discussion in \cite[Remark 1.117]{triebel-III}).

  \item[(iii--iv)] These embeddings can be found in \cite[Remark 1.96 and Theorem 1.97]{triebel-III} for the distribution spaces $B^s_{p,p}(\udomain)$ and $B^{s'}_{p',p'}(\udomain)$.

  \item[(v)] That the function of $(f,g)$ in question is in fact a metric is an easy consequence of the sub-additivity of the function $x \mapsto |x|^{r \land 1}$ for any $r > 0$. Completeness and separability are more or less straightforward consequences of the corresponding properties of the distribution space $B^s_{p,p}(\real^d)$ on the Euclidean space, which are inherited by the localized space $B^s_{p,p}(\udomain)$; see \cite[Remark 1.96]{triebel-III}.
\end{enumerate}

\section{Auxiliary results}\label{ap:auxiliary}

Recall from Section \ref{se:sample-regularity} the notation
\[
  f_A \dfn \frac{1}{m_d(A)} \int_A f(x) \ud x
\]
for all $f \colon \udomain \to \real$ and $A \subset \udomain$ such that the right-hand side above is well-defined (in particular, it is for $f \in L^1(\udomain)$ and measurable $A \subset \udomain$ with $m_d(A) > 0$).

Before stating the first results of this section, we recall a family of discrete convolution operators on $L^1(\udomain)$ which have been studied in the context of first-order Besov spaces in e.g.~\cite{BSS18,SS17,HKT-2017}, and which we will apply extensively below. For this purpose, we fix for every $n \in \nanu$ a maximal set of points $(x^n_i)_{i=1}^{\kappa(n)} \subset \udomain$ such that $|x^n_i - x^n_j| \geq 2^{-n-1}$ for all $i \neq j$ (it can be shown that $\kappa(n) \approx 2^{dn}$ with implicit constants independent of $n$). Let
\[
  \mathcal{B}^n \dfn \left( B^n_1, \cdots, B^n_{\kappa(n)} \right) \dfn \left( B_\udomain(x^n_1, 2^{-n}), \cdots, B_\udomain(x^n_{\kappa(n)}, 2^{-n}) \right),
\]
and for $f \in L^1(\udomain)$
\[
  \pi_{\mathcal{B}^n}(f) \dfn \bigl( f_{B^n_1}, \cdots, f_{B^n_{\kappa(n)}}\bigr) \in \real^{\kappa(n)}.
\]

It is easy to see that the balls in $\mathcal{B}^n$ have uniformly (with respect to $n$) bounded overlap, and that there exists a family of Lipschitz continuous functions $(\psi^n_i)_{i=1}^{\kappa(n)}$ on $\udomain$ such that $\mathop{\mathrm{supp}} \psi^n_i \subset B^n_i$, $\psi^n_i \geq 0$,
\[
  \sum_{i=1}^{\kappa(n)} \psi^n_i \equiv 1
\]
and that the Lipschitz constants of the $\psi^n_i$ are bounded by a constant (indepdent of $n$) times $2^n$. We refer to e.g.~\cite[Definition 3.4]{SS17} and the discussion therein.

\begin{definition}\label{de:discrete-convolution}
  \begin{enumerate}[(i)]
    \item
    For $n \in \nanu$ and $v \in \real^{\kappa(n)}$, define the function $T^n v \colon \real^{\kappa(n)} \to L^1(\udomain)$ by
    \[
      T^n v = \sum_{i=1}^{\kappa(n)} v(i) \psi^n_i.
    \]

    \item
    For $n \in \nanu$ and $f \in L^1(\udomain)$, define
    \[
      T^n f \dfn T^n\bigl( \pi_{\mathcal{B}^n} (f) \bigr).
    \]
  \end{enumerate}
\end{definition}

\begin{proposition}\label{pr:discrete-convolution}
  Let $s \in (0,1)$ and $p > d/(d+s)$.

  \begin{enumerate}[(i)]
    \item\label{en:discrete-conv-sequences}
    For each $n$, the operator $T^n$ is continuous from $\real^{\kappa(n)}$ to $W^{s,p}(\udomain)$.

    \item\label{en:discrete-conv-functions}
    The convergence
    \[
      \lim_{n \to \infty} T^n f = f \quad
    \]
    holds in the metric of $W^{s,p}(\udomain)$ for all $f \in W^{s,p}(\udomain)$.
  \end{enumerate}
\end{proposition}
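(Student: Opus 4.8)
The plan is to prove the two parts separately, with part (\ref{en:discrete-conv-functions}) being the substantive one and part (\ref{en:discrete-conv-sequences}) a technical warm-up. For part (\ref{en:discrete-conv-sequences}), I would fix $n$ and observe that $T^n v = \sum_{i=1}^{\kappa(n)} v(i)\psi^n_i$ is a finite linear combination of fixed Lipschitz functions with compact support in $\udomain$. Since each $\psi^n_i \in W^{s,p}(\udomain)$ (Lipschitz functions on a bounded smooth domain lie in every $W^{s,p}$ for $s \in (0,1)$, which one checks directly from the quasinorm \eqref{eq:besov-quasinorm}), and since $v \mapsto T^n v$ is linear between finite-dimensional-parametrized objects, the map $\real^{\kappa(n)} \to W^{s,p}(\udomain)$ is continuous by the triangle inequality (up to the quasi-norm constant when $p < 1$), with operator-type bound $\|T^n v\|_{W^{s,p}(\udomain)} \lesssim_n \max_i |v(i)|$. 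The dependence of the constant on $n$ is irrelevant here since $n$ is fixed.

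For part (\ref{en:discrete-conv-functions}), the goal is the convergence $T^n f \to f$ in the $W^{s,p}(\udomain)$-metric for every $f \in W^{s,p}(\udomain)$. The natural strategy is a standard density-plus-uniform-bound argument. First I would establish a uniform operator bound, namely that $\|T^n f\|_{W^{s,p}(\udomain)} \lesssim \|f\|_{W^{s,p}(\udomain)}$ with a constant \emph{independent of $n$}; this is where the bounded-overlap property of the balls $\mathcal{B}^n$, the normalization $\sum_i \psi^n_i \equiv 1$, the $2^n$-control on the Lipschitz constants, and the Ahlfors $d$-regularity \eqref{eq:ahlfors-regular} all enter, exactly as in the first-order Besov-space literature \cite{SS17,BSS18}. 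One decomposes the difference $T^nf(x) - T^nf(y)$ using the partition of unity and the fact that on overlapping balls the local averages $f_{B^n_i}$ differ by a controllable amount, reducing the homogeneous quasinorm of $T^nf$ to the homogeneous quasinorm of $f$. Second, I would verify convergence on a dense subclass — say Lipschitz functions on $\overline{\udomain}$, which are dense in $W^{s,p}(\udomain)$ by Proposition \ref{pr:besov-properties} — where $T^nf \to f$ follows directly because local averages of a continuous function over shrinking balls converge uniformly to the function (a Lebesgue-differentiation / modulus-of-continuity estimate, using $\mathop{\mathrm{diam}}(B^n_i) = 2^{-n+1} \to 0$). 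Finally, combining the uniform bound with convergence on the dense subclass gives convergence for all $f$ via the usual three-$\epsilon$ estimate, valid in the quasi-Banach setting since the metric $\|\cdot\|_{W^{s,p}}^{p\wedge 1}$ respects this approximation scheme.

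The main obstacle I anticipate is the uniform-in-$n$ boundedness of $T^n$ on the homogeneous part $\dot W^{s,p}(\udomain)$, particularly in the quasi-Banach range $p < 1$ where one cannot freely use duality or the triangle inequality and must instead work with the $p$-convexity inequality $|\sum a_i|^p \le \sum |a_i|^p$. The delicate point is estimating $\int_{\udomain\times\udomain} |x-y|^{-sp-d}\,|T^nf(x) - T^nf(y)|^p\,\ud x\,\ud y$ by splitting into the near-diagonal region $|x-y| \lesssim 2^{-n}$ (where one exploits the Lipschitz bound on the $\psi^n_i$ together with oscillation of $f$ across neighboring balls) and the far region $|x-y| \gtrsim 2^{-n}$ (where one replaces $T^nf$ values by nearby local averages and compares these to genuine differences of $f$). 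Keeping all constants independent of $n$ requires careful bookkeeping with the bounded-overlap and Ahlfors-regularity properties; this is precisely the technical heart of the argument, and I would lean on the cited references \cite{SS17,BSS18,HKT-2017} rather than reprove these discrete-convolution estimates from scratch.
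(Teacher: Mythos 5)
Your proposal is correct and matches the paper's treatment: part (i) is handled exactly as you describe, by bounding $\|T^n v\|_{W^{s,p}(\udomain)}$ in terms of $\max_i |v(i)|$ via the Lipschitz and support properties of the $\psi^n_i$ (the constant may depend on $n$), and for part (ii) the paper simply cites \cite[Proposition 5.4]{SS17}, whose underlying argument is precisely the uniform-boundedness-plus-density scheme you outline. The only quibble is that the density of Lipschitz functions in $W^{s,p}(\udomain)$ is not actually stated in Proposition \ref{pr:besov-properties}, so you would need to source it separately (it is standard in the range $p > d/(d+s)$), but since you, like the paper, ultimately defer the technical core to \cite{SS17}, this is a citation issue rather than a mathematical gap.
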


\begin{proof}
Part (i) is a simple computation using the Lipschitz continuity and boundedness properties of the $\psi^n_i$'s. By linearity, it suffices to check that
\[
  \| T^n v\|_{W^{s,p}(\udomain)}^{p\land 1} \lesssim \kappa(n) 2^{sn/(p \lor 1)} \max_{1 \leq i \leq \kappa(n)} |v(i)|^{p \land 1}
\]
with an implicit multiplicative constant independent of $v$ and $n$. Part (ii) can be found in \cite[Proposition 5.4]{SS17}.
\end{proof}

For the crucial measurability property in Proposition \ref{pr:besov-integrability} (\ref{en:besov-process}), we need the following description of closed balls in $W^{s,p}(\udomain)$, which gives an almost complete description of the topology of $W^{s,p}(\udomain)$ in terms of the operators $\pi_{\mathcal{B}^n}$ introduced above. Note that by Proposition \ref{pr:besov-properties} (\ref{en:L1-embedding}) in Appendix \ref{ap:properties-besov}, each $\pi_{\mathcal{B}^n}$ is continuous from $W^{s,p}(\udomain)$ to $\real^{\kappa(n)}$.

\begin{lemma}\label{le:auxiliary-closed-ball}
Let $s \in (0,1)$ and $p > d/(d+s)$. For $f \in W^{s,p}(\udomain)$, $\varepsilon > 0$ and $n \in \nanu$, write $V^n(f,\varepsilon)$ for the closed set
\[
  \left\{ v \in \real^{\kappa(n)} \given \| T^n v - T^n f\|_{W^{s,p}(\udomain)}^{p\land 1} \leq \varepsilon \right\} \subset \real^{\kappa(n)}.
\]
Then the closed $W^{s,p}(\udomain)$-ball centered at $f \in W^{s,p}(\udomain)$ with radius $\varepsilon$ can be expressed as
\begin{equation}\label{eq:besov-closed-ball}
  \overline{B}_{W^{s,p}(\udomain)}(f, \varepsilon) = \bigcap_{m = 1}^{\infty} \bigcup_{n=m}^{\infty} \pi_{\mathcal{B}^n}^{-1}\left(V^n(f, \varepsilon_m) \right),
\end{equation}
where $(\varepsilon_m)_{m\in\nanu}$ is any strictly decreasing sequence converging to $\varepsilon$.
\end{lemma}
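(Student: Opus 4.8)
The plan is to unwind both sides into statements about the metric $d(g,h) \dfn \|g - h\|_{W^{s,p}(\udomain)}^{p\land 1}$, which is a genuine metric on $W^{s,p}(\udomain)$ by Proposition \ref{pr:besov-properties} (\ref{en:separable-metric}) and is therefore jointly continuous (from the quasi-triangle inequality one gets $|d(a,b)-d(a',b')| \le d(a,a')+d(b,b')$). First I would observe that, since $T^n g = T^n\bigl(\pi_{\mathcal{B}^n}(g)\bigr)$ by Definition \ref{de:discrete-convolution}, membership $g \in \pi_{\mathcal{B}^n}^{-1}\bigl(V^n(f,\varepsilon_m)\bigr)$ is equivalent to $d(T^n g, T^n f) \le \varepsilon_m$. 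Consequently the right-hand side of \eqref{eq:besov-closed-ball} is precisely the set of $g \in W^{s,p}(\udomain)$ such that for every $m$ there exists $n \ge m$ with $d(T^n g, T^n f) \le \varepsilon_m$, and the whole proof reduces to comparing this condition with $d(g,f) \le \varepsilon$.

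The two analytic ingredients are Proposition \ref{pr:discrete-convolution} (\ref{en:discrete-conv-functions}), which gives $T^n g \to g$ and $T^n f \to f$ in $W^{s,p}(\udomain)$, and the joint continuity of $d$; together these yield $\lim_{n\to\infty} d(T^n g, T^n f) = d(g,f)$. For the inclusion $\subseteq$, I would take $g$ in the right-hand side, choose for each $m$ an index $n_m \ge m$ with $d(T^{n_m} g, T^{n_m} f) \le \varepsilon_m$, and let $m \to \infty$: since $n_m \to \infty$, the left-hand sides converge to $d(g,f)$ along this subsequence while the right-hand sides converge to $\varepsilon$, giving $d(g,f) \le \varepsilon$, i.e.~$g \in \overline{B}_{W^{s,p}(\udomain)}(f,\varepsilon)$.

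For the reverse inclusion $\supseteq$, suppose $d(g,f) \le \varepsilon$. Because $d(T^n g, T^n f) \to d(g,f) \le \varepsilon$ and each $\varepsilon_m > \varepsilon$ strictly, for every $m$ all sufficiently large $n$ satisfy $d(T^n g, T^n f) < \varepsilon_m$; picking one such $n \ge m$ places $g$ in the $m$-th union $\bigcup_{n \ge m} \pi_{\mathcal{B}^n}^{-1}\bigl(V^n(f,\varepsilon_m)\bigr)$, and since $m$ is arbitrary, $g$ lies in the intersection. The one point that genuinely matters is the strict decrease $\varepsilon_m \downarrow \varepsilon$: it is exactly the slack $\varepsilon_m > \varepsilon$ that converts the limiting inequality $d(g,f) \le \varepsilon$ into the existence of a \emph{finite} $n$ with $d(T^n g, T^n f) \le \varepsilon_m$, which is why the statement is framed with the $\liminf$-type set $\bigcap_m \bigcup_{n \ge m}$ rather than a single preimage. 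I expect no serious obstacle beyond bookkeeping; the only things to double-check are that the identity $T^n g = T^n \pi_{\mathcal{B}^n}(g)$ makes the preimage description exact, and that the convergence of the metric may legitimately be read off along the subsequence $(n_m)$ in the first inclusion.
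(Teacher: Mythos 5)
Your proof is correct and follows essentially the same route as the paper: the paper's own argument consists of noting that Proposition \ref{pr:discrete-convolution} (\ref{en:discrete-conv-functions}) gives $\|g-f\|_{W^{s,p}(\udomain)}^{p\land 1} = \lim_{n\to\infty}\|T^n g - T^n f\|_{W^{s,p}(\udomain)}^{p\land 1}$ and then declaring both inclusions straightforward, which is exactly the limit identity you establish and then unwind. Your write-up simply supplies the bookkeeping (the subsequence argument for $\subseteq$ and the role of the strict slack $\varepsilon_m>\varepsilon$ for $\supseteq$) that the paper leaves implicit.
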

That the sets $V^n(f,\varepsilon)$ in the statement are closed follows from Proposition \ref{pr:discrete-convolution} (\ref{en:discrete-conv-sequences}) above.
\begin{proof}
We have by Proposition \ref{pr:discrete-convolution} (\ref{en:discrete-conv-functions}) that
\begin{align*}
  \|g - f\|_{W^{s,p}(\udomain)}^{p \land 1}
  & = \lim_{n\to \infty} \|T^n g - T^n f\|_{W^{s,p}(\udomain)}^{p \land 1}
\end{align*}
for all $g$, $f \in W^{s,p}(\udomain)$. The inclusions between the left-hand and right-hand sides of \eqref{eq:besov-closed-ball} in both directions follow straightforwardly from this.
\end{proof}

\begin{proof}[Proof of Proposition \ref{pr:besov-integrability} (\ref{en:besov-process}), continuation]
Write $F \colon \Omega^H \to W^{s,p}(\udomain)$ for the mapping $\omega \mapsto f^H(\omega, \cdot)$. By Lemma \ref{le:auxiliary-closed-ball}, the closed $W^{s,p}(\udomain)$-ball at $f$ with radius $\varepsilon$ can be written as \eqref{eq:besov-closed-ball}, where the $V^n(f,\varepsilon_m)$ are closed subsets as in the statement of the Lemma. Thus,
\begin{align*}
  F^{-1} \overline{B}_{W^{s,p}(\udomain)}(f, \varepsilon)
  & = \bigcap_{m = 1}^{\infty} \bigcup_{n=m}^{\infty} F^{-1} \pi_{\mathcal{B}^n}^{-1}\left(V^n(f, \varepsilon_m) \right)
  \\
  & = \bigcap_{m = 1}^{\infty} \bigcup_{n=m}^{\infty} \left\{ \omega \in \Omega^H \given \pi_{\mathcal{B}^n}\left( f^H(\omega, \cdot) \right) \in V^n(f, \varepsilon_m)\right\}.
\end{align*}

Recall the measurability of the process $f^H$, established in Proposition \ref{pr:measurable-sample-functions} (\ref{en:measurable-version}). The measurability part of Fubini's theorem, i.e.~that integrals with respect to one coordinate are measurable with respect to the other, can be localized to integrals over the balls in $\mathcal{B}^n$, meaning $\omega \mapsto \pi_{\mathcal{B}^n}( f^H(\omega, \cdot) ) \in \real^{\kappa(n)}$ is an $\mathcal{S}^H$-measurable random vector for all $n$. Each set in the inner union above is therefore in $\mathcal{S}^H$, and hence so is $F^{-1} \overline{B}_{W^{s,p}(\udomain)}(f, \varepsilon)$.
\end{proof}

A key ingredient of the proof of the main result, similar in flavor to the measurability proof above, is the following observation, which essentially implies that after establishing the relative compactness of the laws of the processes $f^H$, it suffices to check the convergence of local averages to those of $f^{\infty}$.

\begin{proposition}\label{pr:local-averages}
  Let $s \in (0,1)$ and $p > d/(d+s)$. Then the collection $\Pi$ of local averages, i.e.~the mappings from $W^{s,p}(\udomain)$ to finite-dimensional Euclidean spaces of the form
  \[
    \pi_{B_1, \cdots, B_n} \dfn f \mapsto \left( f_{B_1} , \cdots, f_{B_n} \right) \in \real^n
  \]
  for some ordered collection $(B_i)_{1 \leq i \leq n}$ of balls in $\udomain$, separates $\pmspace(W^{s,p}(\udomain))$, in the sense that if $P$ and $Q$ are Borel probability measures on $W^{s,p}(\udomain)$ such that $P\circ \pi^{-1} = Q\circ \pi^{-1}$ for all $\pi \in \Pi$, then $P = Q$.
\end{proposition}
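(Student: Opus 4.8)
The plan is to show that the $\sigma$-algebra generated by the local-average maps in $\Pi$ coincides with the full Borel $\sigma$-algebra of $W^{s,p}(\udomain)$, and then to conclude via a standard $\pi$-system uniqueness argument. First I would observe that the collection of cylinder sets
\[
  \mathcal{C} \dfn \bigl\{ \pi^{-1}(A) \given \pi \in \Pi, \ A \subset \real^n \text{ Borel} \bigr\}
\]
is a $\pi$-system: the intersection of two cylinders based on ordered ball collections $(B_i)_{i=1}^n$ and $(B_j')_{j=1}^m$ is again a cylinder, namely one based on the concatenated collection $(B_1,\dots,B_n,B_1',\dots,B_m')$ and an appropriate product-type Borel set in $\real^{n+m}$. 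The hypothesis $P \circ \pi^{-1} = Q \circ \pi^{-1}$ for every $\pi \in \Pi$ says precisely that $P$ and $Q$ agree on every element of $\mathcal{C}$. By Dynkin's $\pi$-$\lambda$ theorem, $P$ and $Q$ therefore agree on $\sigma(\mathcal{C}) = \sigma(\Pi)$.

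It then remains to prove that $\sigma(\Pi) = \mathcal{B}(W^{s,p}(\udomain))$. The inclusion $\sigma(\Pi) \subseteq \mathcal{B}(W^{s,p}(\udomain))$ is immediate, since each $\pi \in \Pi$ is continuous by Proposition \ref{pr:besov-properties} (\ref{en:L1-embedding}) and hence Borel measurable. For the reverse inclusion, the key point is that $W^{s,p}(\udomain)$ is separable (Proposition \ref{pr:besov-properties} (\ref{en:separable-metric})), so its Borel $\sigma$-algebra is generated by the countable family of closed balls centered at points of a fixed countable dense set and with rational radii. I would then invoke Lemma \ref{le:auxiliary-closed-ball} to write each such closed ball as
\[
  \overline{B}_{W^{s,p}(\udomain)}(f,\varepsilon) = \bigcap_{m=1}^{\infty} \bigcup_{n=m}^{\infty} \pi_{\mathcal{B}^n}^{-1}\bigl(V^n(f,\varepsilon_m)\bigr),
\]
noting that each $\pi_{\mathcal{B}^n}$ is itself a local-average map belonging to $\Pi$ (its components are averages over the balls $B^n_i \subset \udomain$), and that each $V^n(f,\varepsilon_m)$ is a closed, hence Borel, subset of $\real^{\kappa(n)}$. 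Consequently every such closed ball lies in $\sigma(\Pi)$, and taking the countable generating family gives $\mathcal{B}(W^{s,p}(\udomain)) \subseteq \sigma(\Pi)$.

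Combining the two parts, $P$ and $Q$ agree on all of $\mathcal{B}(W^{s,p}(\udomain))$, i.e.\ $P = Q$, as required. The conceptual heart of the argument is the representation of closed balls through local averages, but this has already been carried out in Lemma \ref{le:auxiliary-closed-ball}; the only remaining obstacle is the bookkeeping to confirm that this representation places closed balls in $\sigma(\Pi)$ and that separability reduces the generation of the Borel $\sigma$-algebra to countably many such balls. Everything else is routine $\pi$-$\lambda$ measure theory.
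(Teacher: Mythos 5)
Your proposal is correct and follows essentially the same route as the paper: a $\pi$-system/Dynkin argument combined with the representation of closed $W^{s,p}(\udomain)$-balls via Lemma \ref{le:auxiliary-closed-ball} to show that the cylinder sets generate the Borel $\sigma$-field. The paper compresses the measure-theoretic bookkeeping into a citation of \cite[Example 1.3]{billingsley}, but the substance is identical.
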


\begin{proof}
It suffices to show (see \cite[Example 1.3]{billingsley} and the discussion therein) that the class of sets
\begin{equation} \label{eq:separating-sets}
  \left\{ \pi^{-1}_{\mathcal{B}} (V) \given \mathcal{B} \text{ finite collection of balls}, \, V \subset \real^{\#\mathcal{B}} \text{ closed}\right\}
\end{equation}
is a $\pi$-system, and that the $\sigma$-field generated by this class coincides with the Borel $\sigma$-field of $W^{s,p}(\udomain)$.

The $\pi$-system property is immediate from the definition. The mappings in $\Pi$ are continuous on $W^{s,p}(\udomain)$ by Proposition \ref{pr:besov-properties} (\ref{en:L1-embedding}), so it suffices to show that every closed ball in $W^{s,p}(\udomain)$ is contained in the $\sigma$-field generated by \eqref{eq:separating-sets}. This follows directly from Lemma \ref{le:auxiliary-closed-ball}.
\end{proof}

We now turn to the proofs of Lemmas \ref{le:integral-approximations} and \ref{le:lebesgue-points}. For this purpose, we introduce another approximation scheme, this time a probabilistic one for the processes $f^H$.

For all $n \in \nanu$, write $Q^n_j$, $1 \leq j \leq \kappa(n)$, for the collection of the standard semi-open dyadic cubes in $\real^d$ with side length $2^{-n}$ that intersect $\udomain$, and pick arbitrary $x^n_j \in Q^n_j \cap \udomain$. Note that $\kappa(n)$ is not necessarily the same number as the corresponding one we used in the definition of the operators $T^n$ earlier in this appendix, but we still have $\kappa(n) \approx 2^{dn}$ with multiplicative constants independent of $n$. 
For $H \in \nanu\cup\{\infty\}$ and $n \in \nanu$, write $f^{H,n}$ for the process on $\udomain$ defined as
\[
  f^{H,n}(x) = \sum_{j=1}^{\kappa(n)} f^H(x^n_j) \boldsymbol{1}\left(x \in Q^n_j\right).
\]

We recall (Proposition \ref{pr:measurable-sample-functions} (\ref{en:measurable-version})) that each process $f^H$ was realized as a measurable version on the product space $\Omega^H \times \udomain$. Write $\mu^H$ for the product measure on this product space. For any given $p \in (0, \alpha)$ we have by Proposition \ref{pr:measurable-sample-functions} (\ref{en:modulus-of-continuity}) and Fubini's theorem
\begin{align*}
  & \int_{\Omega^H \times \udomain} \sum_{n \in \nanu} \left| f^{H}(\omega, x) - f^{H,n}(\omega, x) \right|^p \mu^H\left( \ud (\omega, x) \right)
  \\
  = & \sum_{n \in \nanu} \int_{\udomain} \E\left[ \left| f^{H}(x) - f^{H,n}(x) \right|^p \right] \ud x < \infty,
\end{align*}
which means that the series inside the left-hand side integral converges $\mu^H$-almost surely, and so
\begin{equation}\label{eq:subsequence-ae-convergence}
  \lim_{n \to \infty} f^{H,n}(\omega, x) = f^{H}(\omega, x)
\end{equation}
for $\mu^H$-almost every $(\omega,x)$. It is then not hard to see that for $\P^H$-almost every $\omega$, \eqref{eq:subsequence-ae-convergence} holds for $m_d$-almost every $x \in \udomain$.

\begin{proof}[Proof of Lemma \ref{le:integral-approximations}]
We will show that
\begin{equation}\label{eq:integral-approximations-str}
  \lim_{n \to \infty} \E \left[ \left( \int_{\udomain} \left| f^H(x) - f^{H,n}(x) \right| \ud x \right)^p \right] = 0
\end{equation}
for all $p \in (d/(d+\lambda), \alpha)$, uniformly in $H \in \nanu\cup\{\infty\}$, so the statement of the lemma follows by taking $x^n_j$ as in the definition of the process $f^{H,n}$ above and $\theta^n_j = m_d( Q^n_j \cap A)/m_d(A)$ (possibly discarding the point-scalar pairs $(x^n_j, \theta^n_j)$ where $\theta^n_j = 0)$.

For $p \geq 1$, \eqref{eq:integral-approximations-str} is an immediate consequence of H\"older's inequality, Fubini's theorem and Proposition \ref{pr:measurable-sample-functions} (\ref{en:modulus-of-continuity}). Below we will consider the case $p \in (d/(d+\lambda), 1 \land \alpha)$, where instead of H\"older's inequality we approximate the integral in \eqref{eq:integral-approximations-str} with a countable sum and use the subadditivity of the function $x \mapsto |x|^p$.
For this purpose, we write $\lambda' \dfn \lambda$ in case $\lambda < 1$, and if $\lambda = 1$, pick $\lambda' \in (0,1)$ so that $p > d/(d+\lambda')$.

We begin by estimating
\begin{align*}
  \int_{\udomain} |f^{H,n+1}(x) - f^{H,n}(x)| \ud x
  & = \sum_{j=1}^{\kappa(n)} \sum_{Q^{n+1}_k \subset Q^n_j} |f^H(x^{n+1}_k) - f^H(x^n_j)| m_d(Q^{n+1}_k \cap \udomain)
  \\
  & \leq 2^{-nd} \sum_{j=1}^{\kappa(n)} \sum_{Q^{n+1}_k \subset Q^n_j} |f^H(x^{n+1}_k) - f^H(x^n_j)|,
\end{align*}
and since there is an uniformly bounded number of cubes $Q^{n+1}_k$ contained in each $Q^{n}_j$, we can use subadditivity and the modulus of continuity estimate in Proposition \ref{pr:measurable-sample-functions} (\ref{en:modulus-of-continuity}) we get
\[
  \E \left[ \left( \int_{\udomain} \left| f^{H,n+1}(x) - f^{H,n}(x) \right| \ud x \right)^p \right] \lesssim 2^{n(d - (d+\lambda')p)}
\]
with a multiplicative constant independent of $n$ and $H$. Since $n(d - (d+\lambda')p) < 0$ by assumption, $n+1$ in the integral above can by a telescoping argument be replaced by any $m > n$, and in particular
\begin{equation}\label{eq:integral-approximation-mn}
  \sup_{m > n} \E \left[ \left( \int_{\udomain} \left| f^{H,m}(x) - f^{H,n}(x) \right| \ud x \right)^p \right] \lesssim 2^{n(d - (d+\lambda')p)}
\end{equation}
with again a multiplicative constant independent of $n$ and $H$.

We now recall that the pointwise convergence \eqref{eq:subsequence-ae-convergence} holds $\P^H$-almost surely for $m_d$-almost every $x \in \udomain$. Thus, by Fatou's lemma,
\begin{align*}
  \E \left[ \left( \int_{\udomain} \left| f^H(x) - f^{H,n}(x) \right| \ud x \right)^p \right]
  & \leq \liminf_{m \to \infty} \E \left[ \left( \int_{\udomain} \left| f^{H,m}(x) - f^{H,n}(x) \right| \ud x \right)^p \right]
  \\
  & \leq \sup_{m > n} \E \left[ \left( \int_{\udomain} \left| f^{H,m}(x) - f^{H,n}(x) \right| \ud x \right)^p \right],
\end{align*}
and the latter quantity converges to zero as $n \to \infty$ uniformly in $H$, as noted in \eqref{eq:integral-approximation-mn}.
\end{proof}

\begin{proof}[Proof of Lemma \ref{le:lebesgue-points}]
Let $\lambda' \dfn \lambda$ in case $\lambda < 1$, and if $\lambda = 1$, pick $\lambda' \in (0,1)$ so that $p > d/(d+\lambda')$. We will start by establishing
\begin{equation}\label{eq:lebesgue-uniform}
  \E \left[ \left( |f^H - f^H(x)|_{B_\udomain(x , r)} \right)^p \right] \lesssim r^{\lambda' p}
\end{equation}
for $r \in (0, 1/2)$, with a multiplicative constant independent of $r$ and $H$. As in the proof of Lemma \ref{le:integral-approximations} above, this is for $p \geq 1$ an immediate consequence of H\"older's inequality and Proposition \ref{pr:measurable-sample-functions} (\ref{en:modulus-of-continuity}). Below we will establish this estimate for $p \in (d/(d + \lambda'), 1\land \alpha)$.

Recall the discrete approximation processes $f^{H,n}$, $n \in \nanu$, introduced before the proof of Lemma \ref{le:integral-approximations}. We begin by considering $n$ such that $2^{-n-1} \leq r < 2^{-n}$. It is not hard to see that there are finitely many cubes $Q^n_j$ that intersect $B_{\udomain}(x,r)$, and the number of such cubes has an upper bound independent of $x$ and $r$. Thus, by  Proposition \ref{pr:measurable-sample-functions} (\ref{en:modulus-of-continuity}),
\begin{align*}
  \E \left[ \left( |f^{H,n} - f^H(x)|_{B_\udomain(x , r)} \right)^p \right]
  \lesssim \E \left[ \left( \sum_{Q^n_j \cap B_{\udomain}(x,r) \neq \emptyset} |f^H(x^n_j) - f^H(x)| \right)^p \right] \lesssim 2^{-n \lambda' p}.
\end{align*}
Next, for $m \geq n$, we note that there are $\lesssim 2^{(m-n)d}$ cubes $Q^m_j$ intersecting $B_{\udomain}(x,r)$ with a multiplicative constant independent of $x$ and $r$, so
\begin{align*}
  & \E \left[ \left( |f^{H,m+1} - f^{H,m}(x)|_{B_\udomain(x , r)} \right)^p \right]
  \\
  & \qquad \lesssim  \E \left[ \left( 2^{nd} \sum_{Q^m_j \cap B_{\udomain}(x,r) \neq \emptyset} 2^{-m d} \sum_{Q^{m+1}_i \subset Q^m_j}|f^H(x^m_j) - f^H(x^{m+1}_i)| \right)^p \right] 
  \\
  & \qquad \lesssim 2^{(n-m)d p} 2^{(m-n)d} 2^{-m \lambda' p} = 2^{n(dp - d)} 2^{m(d-(d+\lambda')p)},
\end{align*}
where $d - (d+\lambda')p < 0$ by assumption.

Recall that $f^{H,n}(\omega, x) \to f^H(\omega, x)$ as $n \to \infty$ almost everywhere in the product space $\Omega^H \times \udomain$ with respect to the product measure (see the discussion preceding the proof of Lemma \ref{le:integral-approximations}). 
Combining the estimates established above we have
\begin{align*}
  & \E \left[ \left( |f^H - f^H(x)|_{B_\udomain(x , r)} \right)^p \right] 
  \\
  & \qquad \leq \E \left[ \left( |f^{H,n} - f^H(x)|_{B_\udomain(x , r)} \right)^p \right] + \sum_{m \geq n} \E \left[ \left( |f^{H,m+1} - f^{H,m}(x)|_{B_\udomain(x , r)} \right)^p \right]
  \\
  & \qquad \lesssim 2^{-n \lambda' p} + 2^{n(dp - d)} \sum_{m \geq n} 2^{m(d-(d+\lambda')p)}
  \approx 2^{-n \lambda' p} \approx r^{\lambda' p},
\end{align*}
which is the estimate \eqref{eq:lebesgue-uniform}.

The first claim of the Lemma will follow if we can show
\begin{equation}\label{eq:lebesgue-pointwise}
  \lim_{r \to 0^+} |f^H - f^H(x)|_{B_\udomain(x , r)} = 0
\end{equation}
almost surely. By the Ahlfors regularity condition \eqref{eq:ahlfors-regular}, for any $r > 0$ we have
\[
  |f^H - f^H(x)|_{B_\udomain(x , 2^{-n-1})} \lesssim |f^H - f^H(x)|_{B_\udomain(x , r)} \lesssim |f^H - f^H(x)|_{B_\udomain(x , 2^{-n})}
\]
for $n \in \integer$ such that $2^{-n-1} \leq r < 2^{-n}$, so it suffices to show \eqref{eq:lebesgue-pointwise} for $r = 2^{-n} \to 0$. By \eqref{eq:lebesgue-uniform}, we have
\[
  \E \left[ \left( \sum_{n \geq 1} |f^H - f^H(x)|_{B_\udomain(x , 2^{-n})} \right)^{p \land 1} \right] \lesssim \sum_{n \geq 1} 2^{-n \lambda' (p \land 1)} < \infty,
\]
which means that the series inside the left-hand side expectation has to converge almost surely.
\end{proof}

\section*{Acknowledgments}
The author would like to thank Lassi Roininen and Petteri Piiroinen for helpful discussions concerning the subject. The work was supported by the Research Council of Finland through the Flagship of Advanced Mathematics for Sensing, Imaging and Modelling, and the Centre of Excellence of Inverse Modelling and Imaging (decision numbers 359183 and 353095).

\end{document}